\newcommand{\kom}[1]{}
\renewcommand{\kom}[1]{{\bf [#1]}}
\newcommand\blfootnote[1]{%
  \begingroup
  \renewcommand\thefootnote{}\footnote{#1}%
  \addtocounter{footnote}{-1}%
  \endgroup
}
 \def\1{\raisebox{2pt}{\rm{$\chi$}}}
\newtheorem{theorem}{Theorem}[section]
\newtheorem{lemma}[theorem]{Lemma}
\newtheorem{proposition}[theorem]{Proposition}
\newtheorem{definition}[theorem]{Definition}
\newtheorem{remark}[theorem]{Remark}
 \def\1{\raisebox{2pt}{\rm{$\chi$}}}
\DeclareMathOperator\supp{supp}
\def\vint_#1{\mathchoice%
          {\mathop{\kern 0.2em\vrule width 0.6em height 0.69678ex depth -0.58065ex
                  \kern -0.8em \intop}\nolimits_{\kern -0.4em#1}}%
          {\mathop{\kern 0.1em\vrule width 0.5em height 0.69678ex depth -0.60387ex
                  \kern -0.6em \intop}\nolimits_{#1}}%
          {\mathop{\kern 0.1em\vrule width 0.5em height 0.69678ex
              depth -0.60387ex
                  \kern -0.6em \intop}\nolimits_{#1}}%
          {\mathop{\kern 0.1em\vrule width 0.5em height 0.69678ex depth -0.60387ex
                  \kern -0.6em \intop}\nolimits_{#1}}}
\def\vintslides_#1{\mathchoice%
          {\mathop{\kern 0.1em\vrule width 0.5em height 0.697ex depth -0.581ex
                  \kern -0.6em \intop}\nolimits_{\kern -0.4em#1}}%
          {\mathop{\kern 0.1em\vrule width 0.3em height 0.697ex depth -0.604ex
                  \kern -0.4em \intop}\nolimits_{#1}}%
          {\mathop{\kern 0.1em\vrule width 0.3em height 0.697ex depth -0.604ex
                  \kern -0.4em \intop}\nolimits_{#1}}%
          {\mathop{\kern 0.1em\vrule width 0.3em height 0.697ex depth -0.604ex
                  \kern -0.4em \intop}\nolimits_{#1}}}
\newcommand{\kint}{\vint}
\newcommand{\aveint}[2]{\mathchoice%
          {\mathop{\kern 0.2em\vrule width 0.6em height 0.69678ex depth -0.58065ex
                  \kern -0.8em \intop}\nolimits_{\kern -0.45em#1}^{#2}}%
          {\mathop{\kern 0.1em\vrule width 0.5em height 0.69678ex depth -0.60387ex
                  \kern -0.6em \intop}\nolimits_{#1}^{#2}}%
          {\mathop{\kern 0.1em\vrule width 0.5em height 0.69678ex depth -0.60387ex
                  \kern -0.6em \intop}\nolimits_{#1}^{#2}}%
          {\mathop{\kern 0.1em\vrule width 0.5em height 0.69678ex depth -0.60387ex
                  \kern -0.6em \intop}\nolimits_{#1}^{#2}}}
\newcommand{\diam}{\operatorname{diam}}
\numberwithin{equation}{section}
\definecolor{color1}{rgb}{0.309, 0.43,0.258}
\definecolor{color2}{rgb}{0.741, 0.502,0.743}
\definecolor{color3}{rgb}{0.580, 0.163,0.107}
\definecolor{color0}{rgb}{0.825, 0.201, 0.699}
\begin{document}

\title[]{On $W^{2,p}$-estimates for solutions of obstacle problems for fully nonlinear elliptic equations
  with oblique boundary conditions}

\author[Byun]{Sun-Sig Byun}
\address{Department of Mathematical Sciences, Seoul National University, 
Seoul 08826, Republic of Korea}
\email{byun@snu.ac.kr}

\author[Han]{Jeongmin Han}
\address{Department of Mathematical Sciences, Seoul National University, 
Seoul 08826, Republic of Korea}
\email{hanjm9114@snu.ac.kr}

\author[Oh]{Jehan Oh}
\address{Department of Mathematics, Kyungpook National University, 
Daegu 41566, Republic of Korea}
\email{jehan.oh@knu.ac.kr}

\blfootnote{S.-S. Byun was supported by NRF-2017R1A2B2003877. J. Han was supported by NRF-2019R1C1C1003844. J.Oh was supported by NRF-2020R1C1C1A01014904.}

%\date{\today}
\keywords{$ W^{2,p}$-regularity; Fully nonlinear equations; Oblique derivative problems; Obstacle problems} \subjclass[2010]{Primary: 35J25; Secondary: 35J15, 35J60.}

\begin{abstract}
This paper concerns fully nonlinear elliptic obstacle problems with oblique boundary conditions. 
We investigate the existence, uniqueness and $W^{2,p}$-regularity results by finding approximate
non-obstacle problems with the same oblique boundary condition and
then making a suitable limiting process.
\end{abstract}

\maketitle

%\tableofcontents

\section{Introduction and main results}

This paper concerns the existence, uniqueness and regularity for viscosity solutions to the following obstacle problem with oblique boundary data
\begin{align} \label{eloblobs}
\left\{ \begin{array}{ll}
F(D^{2}u,Du,u,x) \le f & \textrm{in $\Omega, $}\\
(F(D^{2}u,Du,u,x) -f)(u-\psi) = 0 & \textrm{in $\Omega$, } \\
u \ge \psi & \textrm{in $\Omega$,} \\
\beta \cdot Du=0 & \textrm{on $\partial \Omega$} 
\end{array} \right.
\end{align}
for a given obstacle $\psi \in W^{2,p}(\Omega)$ satisfying
$\beta \cdot D\psi \ge 0 $ a.e. on $\partial \Omega$.
Here $\Omega$ is a bounded domain in $\mathbb{R}^{n}$ with its boundary $\partial \Omega \in C^{3}$,
$F$ is uniformly elliptic with constants $\lambda$ and $\Lambda$, i.e.,
$$ \lambda ||N|| \le F(M+N,q,r,x)-F(M,q,r,x) \le \Lambda ||N|| $$
for any $n \times n$ symmetric matrices $M, N$ with $N \ge 0$, $q \in \mathbb{R}^{n}$, $r \in \mathbb{R}$ and $x \in  \Omega$, and
$\beta$ is a vector-valued function with $||\beta||_{L^{\infty}(\partial \Omega)} = 1 $ and $\beta \cdot \mathbf{n} \ge \delta_{0} $ for some positive constant $\delta_{0}$,
where $ \mathbf{n}$ is the inner unit normal vector field of $\partial \Omega$.

The main purpose of this paper is to find an optimal $W^{2,p}$-regularity theory for \eqref{eloblobs}.
More precisely, we want to identify the minimal condition of $F$ with respect to $x$-variable
under which the Hessian of a solution is as integrable as both the nonhomogeneous term $f$ and the Hessian of the obstacle $\psi$ in the setting of $L^{p}$ spaces for $n<p < \infty$.%, with $n$ to be described later in Section 3.

Throughout this paper, we assume that $F=F(X,q,r,x)$ is convex in $X$ 
and satisfies
\begin{align} \label{scpro}
d(r_{2}-r_{1}) \le F(X,q,r_{1},x)-F(X,q,r_{2},x)
\end{align}
for any $X \in S(n)$, $q \in \mathbb{R}^{n}$, $r_{1}, r_{2} \in \mathbb{R}$ with $r_{1} \le r_{2}$, $x \in \Omega$, and
some $d>0$.
We further assume that
\begin{align} \label{ob_sc} \begin{split}
& \mathcal{M}^{-} (\lambda, \Lambda, X_{1}-X_{2}) -b|q_{1}-q_{2}| - c|r_{1}-r_{2}| \\ & 
\qquad \le F(X_{1},q_{1},r_{1},x) - F(X_{2},q_{2},r_{2},x) \\ &
\qquad \qquad \le \mathcal{M}^{+} (\lambda, \Lambda, X_{1}-X_{2}) +b|q_{1}-q_{2}| + c|r_{1}-r_{2}|
\end{split}
\end{align}
for $X_{1},X_{2} \in S(n) $, $ q_{1}, q_{2} \in \mathbb{R}^{n}$, $r_{1}, r_{2} \in \mathbb{R}$ and $ x \in \Omega $.
These assumptions are essential in order to derive our desired results for solutions of \eqref{eloblobs},
such as the existence, uniqueness and $W^{2,p}$-regularity.

With an oscillation function of $F$ defined as
\begin{align*}  \Theta_{F}(x_{1},x_{2}) := \sup_{X \in S(n) \backslash \{0\}} \frac{|F(X,0,0,x_{1})-F(X,0,0,x_{2})|}{||X||}
\end{align*}
alongside a small perturbation of $\Theta_{F}$ from its integral average in the $L^{n}$-sense, 
we shall prove the $W^{2,p}$-regularity for \eqref{eloblobs}, as we now state the main result of the paper.
We remark that this approach to derive $W^{2,p}$-regularity was employed in \cite{MR1005611}.
\begin{theorem} \label{obob_mthm}
Let $n<p<\infty$.
Assume that $F=F(X,q,r,x)$ is convex in $X$, satisfies \eqref{scpro}-\eqref{ob_sc} and $F(0,0,0,x) \equiv 0$,
 $ \partial \Omega \in C^{3},f \in L^{p}(\Omega), \beta \in C^{2}(\partial \Omega)$ with $\beta \cdot \mathbf{n} \ge \delta_{0}$ for some $\delta_{0} > 0$
and $\psi \in W^{2,p}(\Omega)$.
Then there exists a small $\epsilon=\epsilon(  n, \lambda, \Lambda, p, \delta_{0}, ||\beta||_{C^{2}(\partial \Omega)})>0$
such that if
\begin{align} \label{smallbmo} \sup_{x_{0}\in \overline{\Omega}, 0<\rho<\rho_{0} } \bigg(  \kint_{B_{\rho}(x_{0}) \cap  \Omega }     \Theta_{F}(  x, x_{0})^{n} \ dx  \bigg)^{1/n} \le \epsilon
\end{align}
for some $\rho_{0}>0$,
then there is a unique viscosity solution $u \in W^{2,p}(\Omega)$  of \eqref{eloblobs} with the following estimate
$$||u||_{W^{2,p}(\Omega)} \le c(||f||_{L^{p}(\Omega)}+||\psi||_{W^{2,p}(\Omega)} ) $$
for some constant $c=c(n, \lambda, \Lambda, p, \delta_{0},b,c, ||\beta||_{C^{2}(\partial \Omega)},
\partial \Omega, \diam(\Omega),\rho_{0})$.
\end{theorem}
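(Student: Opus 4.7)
The plan is to approximate \eqref{eloblobs} by a family of non-obstacle oblique boundary value problems through penalization, extract uniform $W^{2,p}$-estimates from the regularity theory for such non-obstacle problems (which should first be established under the smallness hypothesis \eqref{smallbmo}), and then pass to the limit. The penalization has to be chosen so that the oblique condition $\beta \cdot Du = 0$ is preserved by every approximant, which is where the compatibility assumption $\beta \cdot D\psi \ge 0$ on $\partial\Omega$ enters.

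For each $\varepsilon>0$, I would take a smooth nondecreasing convex penalty $\beta_\varepsilon : \mathbb{R} \to [0,\infty)$ with $\beta_\varepsilon \equiv 0$ on $(-\infty,0]$ and $\beta_\varepsilon(s) \to +\infty$ as $\varepsilon \to 0$ for each $s>0$, and solve the non-obstacle problem
\begin{equation*}
\left\{
\begin{array}{ll}
F(D^{2}u_\varepsilon,Du_\varepsilon,u_\varepsilon,x) + \beta_\varepsilon(\psi-u_\varepsilon) = f & \textrm{in } \Omega, \\
\beta \cdot Du_\varepsilon = 0 & \textrm{on } \partial \Omega.
\end{array}
\right.
\end{equation*}
Existence of $u_\varepsilon \in W^{2,p}(\Omega)$ should follow from a Leray--Schauder/continuity argument anchored on the $W^{2,p}$-solvability of the non-obstacle oblique problem for $F$ under \eqref{smallbmo}, applied with right-hand side depending Lipschitz-continuously on $u_\varepsilon$.

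The next step is to produce $\varepsilon$-independent bounds. A maximum-principle argument using \eqref{scpro}, the compatibility $\beta \cdot D\psi \ge 0$, and $F(0,0,0,x) \equiv 0$ yields $\|u_\varepsilon\|_{L^\infty(\Omega)} \le C$. The critical step is an $L^p$-bound on the penalty itself: by the convexity of $F$ in $X$, subtracting from the equation evaluated at $\psi$ and applying \eqref{ob_sc} leads to a differential inequality for $u_\varepsilon - \psi$ governed by a Pucci operator and the penalty; an ABP-type argument adapted to the oblique boundary then yields
\begin{equation*}
\|\beta_\varepsilon(\psi - u_\varepsilon)\|_{L^p(\Omega)} \le c\bigl(\|f\|_{L^p(\Omega)}+\|\psi\|_{W^{2,p}(\Omega)}\bigr).
\end{equation*}
Feeding this into the non-obstacle $W^{2,p}$-estimate valid under \eqref{smallbmo} produces
\begin{equation*}
\|u_\varepsilon\|_{W^{2,p}(\Omega)} \le c\bigl(\|f\|_{L^p(\Omega)}+\|\psi\|_{W^{2,p}(\Omega)}\bigr)
\end{equation*}
with a constant $c$ independent of $\varepsilon$.

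Since $p>n$, the embedding $W^{2,p}(\Omega) \hookrightarrow C^{1,\alpha}(\overline{\Omega})$ is compact, so some subsequence converges weakly in $W^{2,p}$ and strongly in $C^{1,\alpha}$ to a limit $u$ that inherits the estimate. The uniform $L^p$-bound on the penalty, combined with $\beta_\varepsilon \to +\infty$ on $(0,\infty)$, forces $(\psi-u)^+ \equiv 0$, i.e., $u \ge \psi$; standard stability of viscosity solutions then gives $F(D^2u,Du,u,x) \le f$ in $\Omega$ together with $F=f$ on $\{u>\psi\}$, and the oblique condition passes to the $C^1$-limit. Uniqueness follows from a comparison principle using \eqref{scpro}. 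The main obstacle I expect is obtaining the $L^p$-bound on $\beta_\varepsilon(\psi - u_\varepsilon)$ uniformly in $\varepsilon$: in the fully nonlinear viscosity setting this cannot be derived by the usual variational test-function trick, and instead requires a careful ABP/Alexandrov-type estimate compatible with the oblique boundary condition. A secondary difficulty is establishing the non-obstacle $W^{2,p}$-theory with constants depending only on the data under the small-BMO hypothesis \eqref{smallbmo}, following the Caffarelli perturbation scheme of \cite{MR1005611} but adapted to oblique boundary data.
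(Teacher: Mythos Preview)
Your overall architecture---approximate by non-obstacle oblique problems, obtain uniform $W^{2,p}$ bounds, pass to the limit, and conclude uniqueness by comparison---matches the paper's. But the approximation scheme you choose is genuinely different, and the difference is exactly where your acknowledged ``main obstacle'' sits.

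You use the classical unbounded penalty $\beta_\varepsilon(\psi-u_\varepsilon)$ with $\beta_\varepsilon\to+\infty$ on $(0,\infty)$, and then you need a uniform $L^p$ bound on $\beta_\varepsilon(\psi-u_\varepsilon)$ before you can invoke the non-obstacle $W^{2,p}$ theory. You propose to get this from an ABP-type argument, but no such argument is supplied, and in the fully nonlinear setting with merely $L^p$ data this step is not routine: the usual maximum-principle trick at the maximum of $\psi-u_\varepsilon$ produces at best an $L^\infty$ bound on the penalty in terms of pointwise values of $g=f-F(D^2\psi,D\psi,\psi,x)$, which is only in $L^p$. Without a concrete mechanism here, the uniform estimate---and hence the whole compactness step---is unproven.

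The paper sidesteps this difficulty entirely by a different, \emph{bounded} approximation. With $g=f-F(D^2\psi,D\psi,\psi,x)$ and a smooth cutoff $\Phi_\epsilon:\mathbb{R}\to[0,1]$ that is $0$ on $(-\infty,0]$ and $1$ on $[\epsilon,\infty)$, the approximate equation is
\[
F(D^2u_\epsilon,Du_\epsilon,u_\epsilon,x)=g^{+}\Phi_\epsilon(u_\epsilon-\psi)+f-g^{+}\quad\text{in }\Omega,\qquad \beta\cdot Du_\epsilon=0\ \text{on }\partial\Omega.
\]
Because $0\le\Phi_\epsilon\le1$, the right-hand side is bounded in $L^p$ by $\|f\|_{L^p}+2\|g\|_{L^p}\le C(\|f\|_{L^p}+\|\psi\|_{W^{2,p}})$ \emph{automatically}, uniformly in $\epsilon$; no penalty estimate is ever needed. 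Existence of $u_\epsilon$ comes from Schauder's fixed point theorem (map $v_0\mapsto$ solution with RHS $g^{+}\Phi_\epsilon(v_0-\psi)+f-g^{+}$), and the uniform $W^{2,p}$ bound is then immediate from the non-obstacle estimate (Lemma~\ref{ob_mthmgen}). The constraint $u_\epsilon\ge\psi$ is obtained by comparison on $\{u_\epsilon<\psi\}$, where the RHS equals $f-g^{+}\le f-g=F(D^2\psi,D\psi,\psi,x)$; on the non-contact set the RHS converges to $f$. Thus the paper trades your hard analytic step for a clever choice of bounded approximating right-hand side.
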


One of the important issues regarding the obstacle problem is to study solutions near the boundary of the contact surface with the obstacle. 
To this end, suitable approximation methods have been used. 
In this regard, we revisit the argument made in \cite{MR3856840} where the Dirichlet boundary problem was studied instead. 
Our main difficulty in using such an argument comes from the situation that we are treating here the oblique boundary condition.
%First, oblique boundary condition does not give the exact value of a solution on the boundary, unlike in Dirichlet problems. 
%Besides, there is also a difference that our equation has to contain lower-order terms because of the existence issue for the oblique derivative problem.
Thus, we need to modify the tools used in \cite{MR3856840} properly to derive the desired boundary estimates in the present paper.
To do this, we verify several uniform properties of a solution for the corresponding non-obstacle problem such as $W^{2,p}$-regularity and comparison principle.

As a generalization of Neumann boundary problems,
researches on oblique derivative problems have been extensively made as
in \cite{MR738579, MR833695,MR923448, MR1335754, MR1426919, MR1764709}. 
In particular, several notable results for fully nonlinear elliptic equations were obtained in the notion of viscosity solutions. 
The existence and uniqueness of fully nonlinear
oblique derivative problems were proved in \cite{MR1082287,MR1096165, MR1104812}. 
For the regularity of the associated limiting problem, 
there have been established $C^{2,\alpha}$-estimates, see \cite{MR2254613} for the Neumann boundary
condition and \cite{MR3780142} for the oblique boundary condition, respectively. 
In \cite{MR4046185} a global $W^{2,p}$-regularity for the elliptic oblique derivative problem was proved. 

On the other hand, the obstacle problem has been studied along with the free boundary problem. 
We refer the reader to \cite{MR679313,MR2962060} for a general theory of the obstacle problem. 
Regularity results for the elliptic obstacle problem can be found in \cite{MR3198649, MR3353643, MR3542613, MR3906282}. 
We would like to point out that $C^{1}$-regularity for the obstacle problem
with the oblique boundary condition was shown in \cite{MR1875900},
while $W^{2,p}$-regularity for the Dirichlet obstacle problem was given in \cite{MR3856840}.
The main purpose of this paper is to derive a $W^{2,p}$-estimate for the oblique derivative problem
with a $W^{2,p}$-obstacle. 
%As far as we are concerned in the literature our result is the first one
%regarding $W^{2,p}$-regularity for obstacle problems with oblique boundary conditions.
%It is noteworthy to point out that $W^{2, \infty}$-regularity was proved
%in \cite{MR1875900} for a similar problem as in this paper.

The remaining part of the paper is organized in the following way.
In the next section we introduce basic notation and give a brief exposition of viscosity solutions. 
Section 3 deals with the associated oblique derivative problem without obstacles. 
In particular we discuss the existence, uniqueness and $W^{2,p}$-regularity for the non-obstacle problem. 
In the last section we finally give the proof of Theorem \ref{obob_mthm}.

\section{Preliminaries}

\subsection{Notations}
We first introduce some notations which will be used in this paper.
\begin{itemize}
\item $B_{r}(x_{0}):=\{ x \in  \mathbb{R}^{n} :|x-x_{0}|<r \}  $ for $x_{0} \in  \mathbb{R}^{n} $, $r>0$.
$B_{r}=B_{r}(0)$.
\item $S(n)$ is the set of $n \times n $ symmetric matrices and $||M||=\sup_{|x| \le 1} |Mx|$ for any $ M \in S(n)$.
\item We denote the gradient and Hessian of $u$ by $ Du=(D_{1}u, \cdots , D_{n}u)$ and $D^{2}u = (D_{ij}u) $, respectively. Here $ D_{i}u=\frac{\partial u}{\partial x_{i}} $ and $ D_{ij}u=\frac{\partial^{2} u}{\partial x_{i}\partial x_{j}} $ for $ 1 \le i,j \le n$.
\item For any measurable set $A$ with $|A| \neq 0 $ and measurable function $f$, 
to mean the integral average of $f$ over $A$, 
$$ \kint_{A} f \ dx = \frac{1}{|A|} \int_{A} f \ dx. $$
\end{itemize}

\subsection{Basic concepts}
In this subsection, we briefly present some background knowledge for our discussion.
As usual, we are treating a viscosity solution.
To do this, we consider the following problem with oblique boundary data
\begin{align}  \label{elobori}
\left\{ \begin{array}{ll}
F(D^{2}u,Du,u,x) = f & \textrm{in $\Omega, $}\\
\beta \cdot Du=0 & \textrm{on $\partial \Omega$,}\\
\end{array} \right. 
\end{align}
where $\Omega \subset \mathbb{R}^{n}$ is a bounded domain.
There are several ways to define a viscosity solution depending on the choice of a test function.
In this paper, we take a test function $\varphi$ in $W^{2,p}(\Omega)$.
The solution defined in this way is called an $L^{p}$-viscosity solution.
\begin{definition} Let $F$ be continuous in $X$ and measurable in $x$.
Suppose $q > n$ and $f \in L^{q}(\Omega )$. 
A continuous function $u $ is called an $L^{q}$-viscosity solution for \eqref{elobori} if the following conditions hold:
\begin{itemize}
\item[(a)]{(subsolution)} For each $ \varphi \in W^{2,q}(\Omega ) $, whenever $\epsilon>0$, $\mathcal{O}$ is relatively open in $\overline{\Omega}$ and
 $$F( D^{2} \varphi (x ), D \varphi (x ),\varphi (x ), x )  \le f(x)- \epsilon \quad \textrm{a.e. in} \ \mathcal{O} $$ and
 $$ \beta \cdot D\varphi(x) \le - \epsilon \quad \textrm{a.e. on} \ \mathcal{O} \cap \partial \Omega,$$ 
 $u-\varphi$ cannot attain a local maximum in $\mathcal{O}$.
 \\
\item[(b)]{(supersolution)} For each $ \varphi \in W^{2,q}(\Omega) $, whenever $\epsilon>0$, $\mathcal{O}$ is relatively open in $\overline{\Omega}$ and
$$F( D^{2} \varphi (x ), D \varphi (x ),\varphi (x ), x )  \ge f(x)+  \epsilon \quad \textrm{a.e. in} \ \mathcal{O} $$ and 
$$ \beta \cdot D\varphi(x) \ge  \epsilon \quad \textrm{a.e. on} \ \mathcal{O} \cap \partial \Omega,$$ 
$u-\varphi$ cannot attain a local minimum in $\mathcal{O}$. 
\end{itemize}
\end{definition}
We remark that it is also possible to take a $C^{2}$-function as a test function if $F$ is continuous in each variable. 
In this case, the solution is called a $C$-viscosity solution.
For a further discussion of a $C$-viscosity solution, we refer the reader to \cite{MR1351007}.

Next we give some tools to treat viscosity solutions.
To do this, we introduce Pucci extremal operators.
\begin{definition}
Let  $0 < \lambda \le \Lambda $. For any $M \in S(n) $, the Pucci extremal operator $ \mathcal{M}^{+} $ and  $\mathcal{M}^{-}$ are defined as follows:
$$  \mathcal{M}^{+}(\lambda, \Lambda, X)=\Lambda \sum_{e_{i}>0} e_{i} + \lambda \sum_{e_{i}<0} e_{i} $$ and $$  \mathcal{M}^{-}(\lambda, \Lambda, X)=\lambda \sum_{e_{i}>0} e_{i} + \Lambda \sum_{e_{i}<0} e_{i}, $$
where $ e_{i}$ are eigenvalues of $X$. 
Moreover, for $b>0$, we write 
$$ L_{b}^{\pm}(\lambda, \Lambda, u ) = \mathcal{M}^{\pm}(\lambda, \Lambda, D^{2}u) \pm b|Du|, $$
respectively.
\end{definition}

This definition allows us to introduce the class $S$.
These classes can be considered as classes of viscosity solutions.
\begin{definition} \label{classs}
Let  $0 < \lambda \le \Lambda $.
We define the class $\underline{S}(\lambda, \Lambda, b,f) $ \big($ \overline{S}(\lambda, \Lambda,b, f) $, respectively\big) consisting of all functions $u$ such that $$L_{b}^{+}(\lambda,\Lambda,u) \ge f \qquad \big( L_{b}^{-}(\lambda,\Lambda,u) \le f, \textrm{ respectively} \big)$$ in the viscosity sense in $\Omega$. 
We also define
$$S(\lambda, \Lambda,b, f) =\overline{S}(\lambda, \Lambda,b,  f) \cap \underline{S} (\lambda, \Lambda, b, f)  $$ 
and
$$S^{\ast}(\lambda, \Lambda,b,f) =\overline{S}(\lambda, \Lambda,b,| f|) \cap \underline{S} (\lambda, \Lambda, b, -|f|) . $$ 
\end{definition}

\begin{remark}
Let $u$ be a viscosity subsolution (supersolution, respectively) of
$$ F(D^{2}u,Du,u,x)= f \qquad \textrm{in} \ \Omega, $$
where $F=F(X,q,r,x)$ is uniformly elliptic with constants $\lambda, \Lambda$ satisfying the structure condition \eqref{ob_sc}.
Then we can observe that $u$ satisfies
$$L_{b}^{+}(\lambda,\Lambda,u) +F(0,0,u,x) \ge f \qquad \textrm{in} \ \Omega$$
$$ \big(L_{b}^{-}(\lambda,\Lambda,u) +F(0,0,u,x) \le f \qquad \textrm{in} \ \Omega \textrm{, respectively} \big) $$
in the viscosity sense.
\end{remark}

\section{Oblique derivative problems}

Before establishing $W^{2,p}$-regularity ($p>n$) for the obstacle problem \eqref{eloblobs},
we first discuss some issues concerning the existence, uniqueness and regularity for the oblique derivative problem \eqref{elobori}.

%$W^{2,p}$-regularity theory for the equation
%$$ F(D^{2}u,x) = f \qquad \textrm{in} \ \Omega$$
%has been mainly considered in the case of $n<p<\infty$.
%But in many cases, by using the result of \cite{MR1237053}, these results can be extended to the range of $n<p<\infty$ where $n=n-\tau_{0}$ for some small $\tau_{0}=\tau_{0}(n,\lambda,\Lambda, \diam{\Omega} )$. 
%We fix the notation $n$ from now on. 

%We present several types of Alexandroff-Bakelman-Pucci (ABP) estimates
%which are well-known as follows from \cite[Proposition 3.3]{MR1376656}.
%\begin{lemma} \label{abpel}
%Assume that $p>n$ and $\Omega \subset B_{1}$. 
%Let $u\in \overline{S}(\lambda, \Lambda,b, f)$ in $\Omega$ in the viscosity sense.
%Then 
%$$ \sup_{\Omega} u^{-} \le \sup_{\partial \Omega } u^{-}+ C||f^{+}||_{L^{p}(\Omega)},$$
%where $C$ only depends on $n, \lambda,\Lambda$ and $b$.
%\end{lemma}
We recall Definition \ref{classs} to start with an Alexandroff-Bakelman-Pucci (ABP) maximum principle for the oblique boundary problem.
See \cite[Theorem 2.1]{MR3780142} for the proof.
\begin{lemma} \label{obabpsp}
Let $u$ satisfy
\begin{align*} 
\left\{ \begin{array}{ll}
u \in \overline{S}(\lambda, \Lambda, b,f) & \textrm{in $\Omega, $}\\
\beta \cdot Du \le g & \textrm{on $\Gamma \subset \partial \Omega$ }\\ 
\end{array} \right. 
\end{align*}
in the viscosity sense for $f \in L^{n}(\Omega)$ and $g \in L^{\infty}(\Gamma)$.
Suppose that there exist $\xi \in \partial B_{1}$ and $\delta_{1} >0$ such that $\beta (x) \cdot \xi \ge \delta_{1}$ for any $ x \in \Gamma$.
Then 
$$ \sup_{\Omega} u^{-} \le \sup_{\partial \Omega \backslash \Gamma} u^{-}+C(||f^{+}||_{L^{n}(\Omega)}+\max_{\Gamma}g^{+}),$$
where $C$ only depends on $n, \lambda, \Lambda,b,\delta_{1}$ and $\diam (\Omega)$.
\end{lemma}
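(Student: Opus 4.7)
The plan is to adapt the classical convex-envelope ABP argument to the oblique boundary setting by a barrier-type extension across $\Gamma$. Since $u\in\overline{S}(\lambda,\Lambda,b,f)$, setting $v=-u$ gives $v\in\underline{S}(\lambda,\Lambda,b,-f)$ and $\beta\cdot Dv\ge -g$ on $\Gamma$ in the viscosity sense, and the estimate to prove becomes a bound on $\sup_{\Omega}v^+$ in terms of $\sup_{\partial\Omega\setminus\Gamma}v^+$, $\|f^+\|_{L^n(\Omega)}$ and $\max_\Gamma g^+$. Replacing $v$ by $v-\sup_{\partial\Omega\setminus\Gamma}v^+$, one may assume $v\le 0$ on $\partial\Omega\setminus\Gamma$.

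The central device is an extension of $v$ across $\Gamma$ in the transversal direction $\xi$. Fix a small $h>0$ depending on $\delta_1$ and the Lipschitz character of $\partial\Omega$, and set
$$ \widetilde\Omega = \Omega \cup \{x - t\xi : x \in \Gamma,\ 0 < t < h\}, $$
which, by the transversality $\beta\cdot\xi\ge\delta_1>0$ (and hence $\xi$ is not tangent to $\Gamma$), is again a bounded Lipschitz domain. Define
$$ \widetilde v(x - t\xi) = v(x) - K t \quad \text{for } x \in \Gamma,\ 0 \le t < h, \qquad K = \delta_1^{-1}\max_\Gamma g^+, $$
and $\widetilde v = v$ inside $\Omega$. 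The choice of slope $K$ is dictated by the oblique inequality: if a $W^{2,n}$ test function $\varphi$ touches $\widetilde v$ from above at $x_0\in\Gamma$, then from the slab side we read off $D\varphi(x_0)\cdot\xi\le K$, while the oblique supersolution property on $\Gamma$ forces $\beta\cdot D\varphi(x_0)\ge -g(x_0)\ge -\delta_1 K$; together with $\beta\cdot\xi\ge\delta_1$ these inequalities are compatible only when $\varphi$ also touches $v$ from above on the interior side, and the Pucci bound from $v\in\underline{S}$ is inherited by $\varphi$ at $x_0$.

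With this in hand, $\widetilde v$ lies in a Pucci class $\underline{S}(\lambda,\Lambda,b,F)$ on $\widetilde\Omega$ with $F=-f^+\chi_{\Omega}$, since the slab extension is linear along $\xi$ and so contributes vanishing Hessian and controlled gradient. Applying the standard interior $L^n$-viscosity ABP estimate on $\widetilde\Omega$ yields
$$ \sup_{\widetilde\Omega}\widetilde v^+ \le \sup_{\partial\widetilde\Omega}\widetilde v^+ + C\|f^+\|_{L^n(\Omega)}, $$
with $C=C(n,\lambda,\Lambda,b,\diam(\widetilde\Omega))$. The boundary $\partial\widetilde\Omega$ decomposes into a subset of $\partial\Omega\setminus\Gamma$ (where $\widetilde v\le 0$ by the normalization above) and the outer face $\{x-h\xi:x\in\Gamma\}$ (where $\widetilde v\le\sup_\Omega v-Kh$). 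Rearranging and recalling $K=\delta_1^{-1}\max_\Gamma g^+$ produces the claimed inequality, after absorbing the outer-face term by taking $h$ proportional to $\diam(\Omega)$.

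The main obstacle in carrying this out rigorously is the verification, just sketched, that $\widetilde v$ is an $L^n$-viscosity subsolution globally on $\widetilde\Omega$, in particular at points of $\Gamma$ where the extension is only $C^{0,1}$ transversally. This requires handling the interaction between the viscosity formulation with $W^{2,n}$ test functions and the strict transversality $\beta\cdot\xi\ge\delta_1$; one must rule out anomalous test functions that touch $\widetilde v$ from above purely inside the slab without touching $v$ inside $\Omega$, using that the slab extension is affine and $\beta\cdot\xi>0$ prevents a smooth touching configuration confined to one side.
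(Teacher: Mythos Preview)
The paper does not prove this lemma; it simply cites \cite[Theorem 2.1]{MR3780142}. So your attempt is being compared against the Li--Zhang argument, not against anything the authors wrote. That said, your outline has two genuine gaps that prevent it from going through as written.

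First, the geometric claim that $\beta\cdot\xi\ge\delta_1$ forces $\xi$ to be transversal to $\Gamma$ is false. The hypothesis constrains $\xi$ relative to $\beta$, not relative to the normal $\mathbf n$ of $\partial\Omega$. For instance, on the half-space $\{x_n>0\}$ with $\mathbf n=e_n$, take $\beta=\frac{1}{\sqrt2}(e_1+e_n)$ and $\xi=e_1$: then $\beta\cdot\xi=\tfrac{1}{\sqrt2}>0$ but $\xi$ is tangent to $\Gamma$. In that situation the map $(x,t)\mapsto x-t\xi$ does not carry $\Gamma\times(0,h)$ into the exterior of $\Omega$, so your extended domain $\widetilde\Omega$ and the function $\widetilde v$ are not well defined.

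Second, and more seriously, the ``absorption'' at the end is circular. After ABP on $\widetilde\Omega$ you get
\[
\sup_\Omega v^+ \;\le\; \max\bigl\{0,\ \sup_\Gamma v-Kh\bigr\}+C\|f^+\|_{L^n(\Omega)}.
\]
If the second term in the maximum is active you only conclude $Kh\le C\|f^+\|_{L^n}$, which bounds $g^+$ in terms of $f^+$ but says nothing about $\sup_\Omega v^+$. Taking $h$ proportional to $\diam(\Omega)$ does not help: the outer-face value $v(x)-Kh$ still carries the unknown $\sup_\Gamma v$, which can equal $\sup_\Omega v$. There is no way to close the estimate from this inequality alone.

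The argument in \cite{MR3780142} avoids the extension entirely: one adds the linear barrier $A\,x\cdot\xi$ with $A=\delta_1^{-1}\max_\Gamma g^+$ directly to $u$, which shifts the oblique condition to $\beta\cdot D\bar u\le 0$ on $\Gamma$ while keeping $\bar u\in\overline S(\lambda,\Lambda,b,f+bA)$. One then shows, via a Hopf-type touching argument using the strict sign of the oblique derivative, that the lower contact set of $\bar u$ does not meet $\Gamma$, so the standard interior ABP estimate applies on $\Omega$ itself. The cost of the barrier is exactly the $\max_\Gamma g^+$ term plus the $bA$ contribution absorbed into the constant. This is both simpler and avoids the transversality and absorption issues above.
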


We also give a weak Harnack's inequality for supersolutions.
%Since $W^{2,p}$-strong solutions are $W^{2,p}$-viscosity supersolutions, we can also use this result
%for our solution. 
The proof can be found in \cite[Proposition 1.8]{MR2486925}. 
\begin{lemma} \label{weha}
Let $p > n$ and $f \in L^{p}(B_{1})$.
Suppose that $u \in \overline{S}(\lambda, \Lambda,b, f)$ in the viscosity sense and $u \ge 0$ in $B_{1}$.
Then there exist $p_{0}, C > 0$ depending only on $n, \lambda,\Lambda$ and $b$ such that
$$||u||_{L^{p_{0}}(B_{1/2})} \le C(\inf_{B_{1/2}}u +  ||f||_{L^{p}(B_{1})}).$$
\end{lemma}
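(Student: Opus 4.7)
The plan is to follow the classical Krylov--Safonov--Caffarelli strategy for viscosity supersolutions, adapted to incorporate the first-order drift $b|Du|$ built into the class $\overline{S}(\lambda,\Lambda,b,f)$. The proof splits naturally into four stages: (i) a normalization step reducing to the case $||f||_{L^{p}(B_{1})} \le 1$ and $\inf_{B_{1/4}} u \le 1$; (ii) a one-shot measure estimate of the form
$$|\{u \le M\}\cap B_{1/2}|\ge \mu\,|B_{1/2}|$$
for universal constants $M>1$ and $\mu\in(0,1)$; (iii) iteration of this estimate via a dyadic Calder\'on--Zygmund decomposition to obtain weak-type decay $|\{u>t\}\cap B_{1/2}|\le C t^{-p_{0}}$; and (iv) a layer-cake extraction of the $L^{p_{0}}$ bound.

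For the measure estimate in (ii), I would construct a classical barrier $\varphi$ on the annulus $B_{1/2}\setminus \overline{B_{1/8}}$ of the form $\varphi(x) = A(|x|^{-\alpha}-2^{\alpha})$, with the exponent $\alpha=\alpha(n,\lambda,\Lambda,b)$ chosen large. A direct calculation shows that for such $\alpha$ the Pucci term, which is of order $|x|^{-\alpha-2}$, dominates the drift contribution $b|D\varphi|$ of order $|x|^{-\alpha-1}$, so that $L_{b}^{-}(\lambda,\Lambda,\varphi)\le 0$ in the annulus. Subtracting this barrier from $u$ and invoking the interior Alexandroff--Bakelman--Pucci estimate (the interior analogue of Lemma \ref{obabpsp}, for which the oblique boundary piece is vacuous) at the contact set where $u-\varphi$ attains its infimum yields the required one-shot estimate, with the inhomogeneity absorbed via H\"older's inequality from $L^{p}(B_{1})$ into $L^{n}$.

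The iteration step is a standard Calder\'on--Zygmund cube decomposition. Setting $A_{k}=\{u>M^{k}\}\cap B_{1/2}$, one decomposes $A_{k}$ into maximal dyadic subcubes whose density exceeds $1-\mu$, pairs each such cube with its dyadic predecessor, and applies the rescaled one-shot estimate on the predecessor to conclude
$$|A_{k+1}|\le (1-\mu)|A_{k}|+\textrm{(error term from $f$)}.$$
Iterating gives geometric decay $|A_{k}|\le C(1-\mu)^{k}$, hence $|\{u>t\}\cap B_{1/2}|\le C t^{-p_{0}}$ with $p_{0}=\log(1-\mu)^{-1}/\log M$. The layer-cake identity $\int_{B_{1/2}} u^{p_{0}}\,dx = p_{0}\int_{0}^{\infty} t^{p_{0}-1}|\{u>t\}\cap B_{1/2}|\,dt$ then yields $||u||_{L^{p_{0}}(B_{1/2})}\le C$, and undoing the normalization gives the lemma.

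The main obstacle is the handling of the drift $b|Du|$ in the barrier argument: one must select $\alpha$ large enough that the Pucci ellipticity dominates the first-order term uniformly on the annulus, and one must verify that the touching argument in the interior ABP estimate still closes once the drift is present. A secondary technical nuisance is the passage from $L^{p}$ with $p>n$ down to $L^{n}$, handled by H\"older's inequality but requiring careful scaling across the iteration so that the constants $p_{0},C$ depend only on $n,\lambda,\Lambda$ and $b$. Both points are by now standard in the $L^{p}$-viscosity framework, and the result is in any case quoted verbatim from \cite[Proposition 1.8]{MR2486925}, so the role of the sketch here is to recall the structure rather than reprove the statement in detail.
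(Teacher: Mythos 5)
The paper itself gives no proof of Lemma \ref{weha}; it is imported verbatim from \cite[Proposition 1.8]{MR2486925}, so the only benchmark is the classical Krylov--Safonov/Caffarelli scheme that your outline reproduces. Stages (i), (iii) and (iv) of your sketch (normalization, Calder\'on--Zygmund iteration, layer-cake) are the standard ones and are fine, as is the observation that the drift constant only improves under the rescalings used in the iteration (take $p_{0}$ strictly below the decay exponent $\log\frac{1}{1-\mu}/\log M$ so the layer-cake integral actually converges).

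Stage (ii) has two problems. The smaller one is a sign slip: for $\varphi(x)=A(|x|^{-\alpha}-2^{\alpha})$ the eigenvalues of $D^{2}\varphi$ are $A\alpha(\alpha+1)|x|^{-\alpha-2}$ radially and $-A\alpha|x|^{-\alpha-2}$ tangentially, so when the Pucci term dominates the drift one gets $L_{b}^{-}(\lambda,\Lambda,\varphi)=A\alpha|x|^{-\alpha-2}\bigl(\lambda(\alpha+1)-\Lambda(n-1)-b|x|\bigr)\ge 0$, not $\le 0$; fortunately $\ge 0$ is exactly what the comparison needs, since $\mathcal{M}^{-}(X-Y)\le\mathcal{M}^{-}(X)-\mathcal{M}^{-}(Y)$ together with $-b|D(u-\varphi)|\le -b|Du|+b|D\varphi|$ then gives $u-\varphi\in\overline{S}(\lambda,\Lambda,b,f)$. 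The genuine gap is that the contact-set/ABP argument cannot be run with a barrier defined only on the annulus $B_{1/2}\setminus\overline{B_{1/8}}$: there the infimum of $u-\varphi$ may be attained on the inner sphere $\partial B_{1/8}$, about which nothing is known, and you obtain neither a lower bound on the measure of the contact set nor its inclusion in $\{u\le M\}$ --- which is precisely the content of the one-shot estimate. (The annular power barrier is the right tool for the growth/oscillation step of the full Harnack inequality, not for the critical density estimate.) The standard repair, which is what \cite[Lemma 4.5]{MR1351007} does and what \cite{MR2486925} adapts to the $L^{p}$-viscosity setting with drift, is to truncate the power barrier and extend it smoothly across the inner ball, producing a bounded barrier $\Phi$ on all of $B_{1/2}$ with $\Phi\le 0$ on $\partial B_{1/2}$, $\Phi\le -2$ on the region where $\inf u\le 1$ is assumed, $\Phi\ge -M$ everywhere, and $L_{b}^{+}(\lambda,\Lambda,\Phi)\le C\xi$ with $\xi$ supported in the small inner ball; applying the interior ABP estimate (the interior analogue of Lemma \ref{obabpsp}) to $w=u+\Phi\in\overline{S}(\lambda,\Lambda,b,|f|+C\xi)$ then forces the contact set to carry a fixed fraction of the measure and to lie inside $\{u\le M\}$. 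With that replacement, and the H\"older reduction from $L^{p}$ to $L^{n}$ you mention, the rest of your sketch goes through.
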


The following stability lemma, which can be found in \cite[Proposition 1.5]{MR2486925},
will be used later
(see also \cite[Theorem 3.8]{MR1376656}).

\begin{proposition} \label{win15} 
For $k \in \mathbb{N}$, let $ \Omega_{k} \subset \Omega_{k+1} $ be an increasing sequence of bounded domains and $\Omega : = \cup_{k \ge 1} \Omega_{k} $.
Let $F$ and $F_{k}$ be measurable in $x$ and satisfy the structure condition \eqref{ob_sc}.
Assume that for $p>n$, $f \in L^{p}(\Omega) $ and $ f_{k} \in L^{p}(\Omega_{k})$, and that $ u_{k } \in C(\Omega_{k})$ are $L^{p}$-viscosity subsolutions (supersolutions, respectively) of $F_{k}(D^{2}u_{k}, Du_{k}, u_{k}, x) = f_{k}$ in $\Omega_{k} $.
Suppose that $u_{k} \to u$ locally uniformly in $\Omega$, and for $B_{r}(x_{0}) \subset \Omega$ and $ \varphi \in W^{2,p}(B_{r}(x_{0}))$
\begin{align} \label{obcc} ||(s-s_{k})^{+} ||_{L^{p}(B_{r}(x_{0}))} \to 0 \qquad  \big( ||(s-s_{k})^{-} ||_{L^{p}(B_{r}(x_{0}))} \to 0   \big),
\end{align}
where $ s(x) = F(D^{2}\varphi, D \varphi, u, x)-f(x)$ and $ s_{k}(x) = F(D^{2}\varphi_{k}, D \varphi_{k}, u_{k}, x)-f_{k}(x)$.
Then $u$ is an $L^{p}$-viscosity subsolution (supersolution) of $$  F(D^{2}u, Du, u, x)=f(x) \ \textrm{ in} \  \Omega.$$
Moreover, if $ F$ and $f$ are continuous, then $u$ is a $C$-viscosity subsolution (supersolution) provided that  \eqref{obcc} holds for $ \varphi \in C^{2}(B_{r}(x_{0}) )$.
\end{proposition}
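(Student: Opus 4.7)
The plan is to argue by contradiction, following the classical $L^{p}$-viscosity stability scheme of Caffarelli--Crandall--Kocan--\'{S}wi\k{e}ch, and by symmetry I treat only the subsolution case. Suppose $u$ were \emph{not} an $L^{p}$-viscosity subsolution of $F(D^{2}u,Du,u,x)=f$ in $\Omega$. By the definition one then obtains $\epsilon>0$, a relatively open $\mathcal{O}\subset\overline{\Omega}$ with $\mathcal{O}\cap\Omega\neq\emptyset$, and a test $\varphi\in W^{2,p}$ satisfying
$$F(D^{2}\varphi,D\varphi,\varphi,x)\le f(x)-\epsilon \quad \textrm{a.e.\ in }\mathcal{O},$$
together with $\beta\cdot D\varphi\le -\epsilon$ a.e.\ on $\mathcal{O}\cap\partial\Omega$, while $u-\varphi$ attains a local maximum at some $\hat x\in\mathcal{O}$. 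After replacing $\varphi$ by $\varphi+|x-\hat x|^{4}$ and shrinking $\mathcal{O}$ to a neighborhood $\mathcal{O}'$ of $\hat x$, one may assume this maximum is strict on $\overline{\mathcal{O}'}$ and attained uniquely at $\hat x$.

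Since $\Omega=\bigcup_{k}\Omega_{k}$ and the $\Omega_{k}$ are increasing, $\overline{\mathcal{O}'}\subset\Omega_{k}$ for all large $k$. The local uniform convergence $u_{k}\to u$ then delivers local maximum points $\hat x_{k}\in\mathcal{O}'$ of $u_{k}-\varphi$ with $\hat x_{k}\to\hat x$. To derive the desired contradiction it suffices to establish, for all large $k$,
$$F_{k}(D^{2}\varphi,D\varphi,u_{k},x)\le f_{k}(x)-\epsilon/2 \quad \textrm{a.e.\ in }\mathcal{O}',$$
since the boundary inequality $\beta\cdot D\varphi\le-\epsilon$ depends only on $\varphi$ and is preserved automatically. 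Using the structural condition \eqref{ob_sc}, one may replace the $r$-slot in the definition of $s$ and $s_{k}$ by either $\varphi$ or $u_{k}$ at the cost of at most $c\|u_{k}-\varphi\|_{L^{\infty}(\mathcal{O}')}$, which is bounded uniformly in $k$. The hypothesis \eqref{obcc} provides $\|(s-s_{k})^{+}\|_{L^{p}(\mathcal{O}')}\to 0$, and passing to a subsequence via the Riesz lemma yields $(s-s_{k})^{+}\to 0$ a.e.\ on $\mathcal{O}'$. Combined with the bound $s\le -\epsilon$ a.e., this gives $s_{k}\le-\epsilon/2$ a.e., contradicting the $L^{p}$-viscosity subsolution property of $u_{k}$ at the local maximum $\hat x_{k}$.

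The principal obstacle is exactly the bridge from integral smallness in \eqref{obcc} to pointwise a.e.\ control of the integrand, which cannot be made uniform in $k$: one is forced to pass to a subsequence. This is harmless for the contradiction argument but has to be combined carefully with the compactness step extracting $\hat x_{k}$. A secondary but more mechanical difficulty is shifting between $\varphi$ and $u_{k}$ in the zeroth-order slot of $F_{k}$; this is precisely why \eqref{ob_sc} with the $c|r_{1}-r_{2}|$ term is needed in the hypotheses. Finally, the $C$-viscosity addendum follows by repeating the argument with $C^{2}$ test functions, in which case continuity of $F$ and $f$ in $x$ turns the a.e.\ inequalities into strict pointwise inequalities on an open set and the Egorov-type step disappears entirely.
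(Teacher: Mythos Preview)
The paper does not prove this proposition; it merely cites \cite[Proposition 1.5]{MR2486925} and \cite[Theorem 3.8]{MR1376656}. Your contradiction scheme has the right overall shape, but the decisive step is flawed.

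The sentence ``$(s-s_{k})^{+}\to 0$ a.e.\ on $\mathcal{O}'$. Combined with the bound $s\le -\epsilon$ a.e., this gives $s_{k}\le-\epsilon/2$ a.e.'' does not hold. The convergence $(s-s_{k})^{+}\to 0$ says only that $s-s_{k}\le o(1)$, i.e.\ $s_{k}\ge s-o(1)$; this is a \emph{lower} bound on $s_{k}$ and cannot produce the upper bound $s_{k}\le -\epsilon/2$ that you need. Moreover, even granting the correct one-sided hypothesis, a.e.\ convergence along a subsequence furnishes, for a.e.\ $x$, an $x$-dependent threshold $K(x)$ beyond which the pointwise inequality holds; it does \emph{not} give, for any single large $k$, the inequality $s_{k}\le -\epsilon/2$ a.e.\ on the fixed open set $\mathcal{O}'$. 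The latter is precisely what the $L^{p}$-viscosity definition requires before you may conclude that $u_{k}-\varphi$ has no local maximum there. You flag the non-uniformity yourself but then treat it as harmless; it is in fact the whole difficulty.

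The argument in \cite{MR1376656,MR2486925} avoids pointwise control altogether. From \eqref{ob_sc} and the subsolution property of $u_{k}$ one checks that $w_{k}:=u_{k}-\varphi$ lies in $\underline{S}(\lambda,\Lambda,b,-s_{k})$ on $B_{r}(\hat x)$, and the Alexandroff--Bakelman--Pucci maximum principle gives
\[
\sup_{B_{r}(\hat x)} w_{k}\ \le\ \sup_{\partial B_{r}(\hat x)} w_{k}\ +\ C\,\|s_{k}^{+}\|_{L^{p}(B_{r}(\hat x))}.
\]
Since $s\le -\epsilon$ a.e., one has $s_{k}^{+}\le (s_{k}-s)^{+}$, whose $L^{p}$-norm is controlled by the hypothesis \eqref{obcc}; combined with the uniform convergence $w_{k}\to u-\varphi$ and the strictness of the maximum at $\hat x$, this yields the contradiction. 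The $L^{p}$-norm feeds directly into the ABP estimate, so no passage to a.e.\ limits is needed, and that is the missing idea in your sketch. (Incidentally, the statement is purely interior, so the boundary clause $\beta\cdot D\varphi\le -\epsilon$ plays no role here.)
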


Now we return to \eqref{elobori}.
Here we assume that there is a continuous increasing function $\omega$,
defined on $[0, \infty)$ with $\omega(0)=0$, such that
\begin{align} \label{sc_add}
F(X_{1},q,r,x_{1})-F(X_{2},q,r,x_{2}) \le \omega (|x_{1}-x_{2}|(|q|+1)+\alpha|x_{1}-x_{2}|^{2})
\end{align}
holds for any $x_{1},x_{2} \in \Omega$, $q \in \mathbb{R}^{n}$, $r \in \mathbb{R}$, $\alpha >0$ and $X_{1},X_{2} \in S(n)$ satisfying
\begin{align} \label{sc_add2}
-3\alpha \left(\begin{array}{c c}
I & 0 \\
0 & I
\end{array}\right) 
\le 
\left(\begin{array}{c c}
X_{2} & 0 \\
0 & -X_{1}
\end{array}\right) 
\le 3\alpha
\left(\begin{array}{c c}
I & -I \\
-I & I
\end{array}\right) .
\end{align}
%We remark that this assumption is needed to guarantee the existence and uniqueness for solutions of oblique derivative problems, Lemma \ref{ob_exun}.

One can find the following existence and uniqueness for the problem \eqref{elobori} in \cite[Theorem 7.19]{MR3059278}.
We remark that the condition \eqref{sc_add} is needed to ensure this lemma. 
\begin{lemma} \label{ob_exun}
Assume that $F=F(X,q,r,x)$ is convex in $X$ and continuous in $x$, satisfies \eqref{scpro}-\eqref{ob_sc} and \eqref{sc_add}, and $F(0,0,0,x) \equiv 0$,  $ \partial \Omega \in C^{3}$, $f \in L^{p}(\Omega)\cap C(\overline{\Omega})$ for $p>n$, $\beta \in C^{2}(\partial \Omega)$ with $\beta \cdot \mathbf{n} \ge \delta_{0}$ for some $\delta_{0} > 0$. 
Then there exists a unique viscosity solution $u$ of \eqref{elobori}.
\end{lemma}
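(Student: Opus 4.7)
The plan is to obtain Lemma \ref{ob_exun} by Perron's method, following the classical Ishii--Lions program adapted to oblique boundary conditions. The two ingredients are: (i) a comparison principle between $L^{p}$-viscosity sub- and supersolutions that respects the boundary relation $\beta\cdot Du=0$, and (ii) the construction of a global sub/supersolution pair so that Perron's procedure produces a solution lying between them.

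For the comparison principle I would double the variables in the standard way: given a subsolution $u$ and supersolution $v$, I would consider $\Phi_{\alpha}(x,y)=u(x)-v(y)-\tfrac{\alpha}{2}|x-y|^{2}-\text{(boundary corrector)}$ and localize the maximum. The strict monotonicity assumption \eqref{scpro} supplies the coercive term $d(v-u)$ that makes the contradiction quantitative, while the structure assumption \eqref{ob_sc} together with \eqref{sc_add}--\eqref{sc_add2} is precisely what the Crandall--Ishii lemma needs to control $F(X_{1},p_{\alpha},u,x)-F(X_{2},p_{\alpha},v,y)$ by $\omega(|x-y|(|p_{\alpha}|+1)+\alpha|x-y|^{2})$, which tends to $0$ as $\alpha\to\infty$. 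The new feature is that the doubled maximum can occur on $\partial\Omega$; to rule this out one perturbs with a function $\chi(x)=e^{K\,\eta(x)}$, where $\eta\in C^{2}(\overline{\Omega})$ satisfies $\beta\cdot D\eta\ge 1$ on $\partial\Omega$ (such an $\eta$ exists because $\partial\Omega\in C^{3}$, $\beta\in C^{2}(\partial\Omega)$ and $\beta\cdot\mathbf{n}\ge\delta_{0}$; one extends $\delta_{0}^{-1}\beta\cdot\mathbf{n}$ inward through the signed-distance function). Replacing $u$ by $u-\varepsilon\chi$ and $v$ by $v+\varepsilon\chi$ then gives the strict obliqueness needed in the test-function conditions of the $L^{p}$-viscosity definition, so the maximum is forced into the interior and the usual argument concludes $u\le v$.

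For existence I would build a subsolution $\underline{u}$ and a supersolution $\overline{u}$ and then invoke Perron's method with the comparison principle to obtain a continuous viscosity solution. A convenient choice is $\overline{u}=A\varphi$ where $\varphi\in C^{2}(\overline{\Omega})$ is positive, $\beta\cdot D\varphi\ge\delta_{0}/2$ on $\partial\Omega$, and $L_{b}^{+}(\lambda,\Lambda,\varphi)\le-1$ in $\Omega$; such a $\varphi$ comes from solving a linear oblique problem with smooth data (or is explicitly built from the squared distance to a point outside $\Omega$ combined with $\eta$). By \eqref{ob_sc} and $F(0,0,0,x)\equiv 0$, one has $F(D^{2}\overline{u},D\overline{u},\overline{u},x)\le-A+c\,A\varphi$, so for $A$ large depending on $\|f\|_{L^{\infty}}$ and $\|\varphi\|_{C^{2}}$ this is $\le f$; and $\beta\cdot D\overline{u}\ge 0$. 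Taking $\underline{u}=-\overline{u}$ gives the matching subsolution via \eqref{ob_sc}. The approximation $f\in L^{p}\cap C(\overline{\Omega})$ means we may work with $C$-viscosity sub/supersolutions directly and promote to $L^{p}$-viscosity a posteriori by Proposition \ref{win15}.

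The main obstacle I expect is the boundary part of the comparison argument, because the $L^{p}$-viscosity test-function requirements on $\partial\Omega$ are only almost-everywhere inequalities. The standard Perron construction carries through once one has the exponential corrector $\chi$ above, but justifying that the pointwise maximum of the Perron family still satisfies the oblique condition in the $L^{p}$-viscosity sense requires the stability result Proposition \ref{win15} applied both inside $\Omega$ and near $\partial\Omega$; this is where the regularity $\partial\Omega\in C^{3}$, $\beta\in C^{2}(\partial\Omega)$ is actually used (to flatten the boundary and reduce to the half-space case where the doubling argument is clean). Everything else, including uniqueness, is an immediate consequence of the comparison principle.
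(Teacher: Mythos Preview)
The paper does not prove Lemma~\ref{ob_exun} at all; it simply cites \cite[Theorem 7.19]{MR3059278} (Lieberman's monograph) for both existence and uniqueness. Your Perron/comparison sketch is therefore not competing with the paper's argument but rather reproducing, in outline, the content of that reference (and of Ishii \cite{MR1104812}, which the paper also cites). In that sense your plan is correct and is the standard route; the doubling-of-variables comparison with the exponential corrector $e^{K\eta}$, $\beta\cdot D\eta\ge 1$, is exactly how the boundary case is handled in the literature, and \eqref{sc_add}--\eqref{sc_add2} are present in the hypotheses precisely to feed the Crandall--Ishii lemma.

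Two small remarks. First, your barrier construction is more elaborate than necessary: because of the strict monotonicity \eqref{scpro} and $F(0,0,0,x)\equiv 0$, the constants $\overline{u}=A$ and $\underline{u}=-A$ with $A\ge d^{-1}\|f\|_{L^{\infty}(\Omega)}$ already give a super/subsolution pair, and they satisfy $\beta\cdot D(\pm A)=0$ exactly on $\partial\Omega$. Second, in your estimate $F(D^{2}\overline{u},D\overline{u},\overline{u},x)\le -A+cA\varphi$ the zero-order term has the wrong sign if you invoke only \eqref{ob_sc}; using \eqref{scpro} instead gives $-dA\varphi$, which goes the right way. Finally, since $F$ is continuous in $x$ and $f\in C(\overline{\Omega})$, the whole argument can be run for $C$-viscosity solutions directly, so the a.e.\ boundary issue you flag is not a genuine obstacle here; the $L^{p}$-viscosity framework is only needed later, in Lemma~\ref{ob_mthmgen}, where the continuity is dropped.
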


We now have the following $W^{2,p}$-estimate for the viscosity solution to \eqref{elobori}.

\begin{lemma}\label{ob_mthmed}
Under the assumptions and conclusion in Lemma \ref{ob_exun},
there exists a small $\epsilon=\epsilon(  n, \lambda, \Lambda, p, \delta_{0}  )$
such that if
\begin{align} \label{smallbmo} \sup_{x_{0}\in \overline{\Omega}, 0<\rho<\rho_{0} } \bigg(  \kint_{B_{\rho}(x_{0}) \cap  \Omega }     \Theta_{F}(  x, x_{0})^{n} \ dx  \bigg)^{1/n} \le \epsilon 
\end{align}
for some $\rho_{0}>0$,
then the unique solution $u$ belongs to $W^{2,p}(\Omega)$ with the following estimate
\begin{align} \label{estwint}
||u||_{ W^{2,p}(\Omega)} \le C||f||_{ L^{p}( \Omega )}
\end{align}
for some $C=C(n, \lambda, \Lambda, p, \delta_{0},b,c, ||\beta||_{C^{2}(\partial \Omega)}, \diam(\Omega),\rho_{0})$.
\end{lemma}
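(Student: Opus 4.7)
The plan is to derive the a priori $W^{2,p}$-bound \eqref{estwint} by combining interior Caffarelli-type $W^{2,p}$-estimates with a parallel estimate up to the oblique boundary, glued together by a finite covering of $\overline{\Omega}$. Existence and uniqueness of the viscosity solution $u$ are already provided by Lemma \ref{ob_exun}, so I only need the quantitative estimate. The sup-norm bound $||u||_{L^{\infty}(\Omega)} \le C||f||_{L^{p}(\Omega)}$ is immediate from the ABP principle of Lemma \ref{obabpsp} applied to $u$ and $-u$, taking $\Gamma=\partial\Omega$ and noting that $g\equiv 0$. With this in hand and with the weak Harnack inequality Lemma \ref{weha} producing $L^{p}$-control of $|Du|$, the lower-order drift $b|Du|+c|u|$ permitted by \eqref{ob_sc} can be absorbed into the source, so the heart of the argument is to produce an $L^{p}$-bound for $D^{2}u$.

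For the interior estimate I would run the standard Caffarelli--Peral scheme as used in \cite{MR1005611}: on each ball $B_{r}(x_{0})\Subset\Omega$ with $r<\rho_{0}$, the small-BMO hypothesis \eqref{smallbmo} lets me approximate $u$ by a solution $h$ of the frozen-coefficient equation $F(D^{2}h,0,0,x_{0})=0$, which is of class $C^{2,\alpha}$ by convexity of $F$ in $X$ and Evans--Krylov; the resulting quadratic decay of $u$ near good points controls the maximal function of $|D^{2}u|^{n}$ and integrates to $L^{p/n}$. For the boundary estimate I would cover a tubular neighborhood of $\partial\Omega$ by charts centered at points $x_{0}\in\partial\Omega$ and flatten $\partial\Omega\in C^{3}$ locally by a diffeomorphism $\Phi$. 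The transformed function $\tilde u=u\circ\Phi^{-1}$ satisfies a uniformly elliptic equation $\tilde F(D^{2}\tilde u,D\tilde u,\tilde u,y)=\tilde f$ on a half-ball $B_{r}^{+}$ together with an oblique condition $\tilde\beta\cdot D\tilde u=0$ on the flat part, and the small-BMO assumption on $F$ together with the $C^{3}$-regularity of $\partial\Omega$ transfers to $\tilde F$ up to a constant depending on $||\beta||_{C^{2}(\partial\Omega)}$ and $\partial\Omega$, while $\tilde\beta\cdot e_{n}\ge\delta_{0}/2$ survives the pullback. At the boundary I would then approximate $\tilde u$ by a solution of the frozen-coefficient problem on $B_{r}^{+}$ with the same oblique data, for which up-to-the-boundary $C^{2,\alpha}$-estimates are available from \cite{MR3780142}, and run the boundary Calder\'on--Zygmund decomposition as in \cite{MR4046185}.

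The main obstacle is the boundary approximation step: converting the smallness of the $L^{n}$-oscillation of $F$ in $x$ into $L^{\infty}$-closeness of $\tilde u$ to the corresponding frozen-coefficient solution $h$ on $B_{r}^{+}$ with the same oblique data. The tools of the paper must be assembled carefully here: one uses Proposition \ref{win15} to extract a limit from an approximating sequence, then applies the boundary ABP bound Lemma \ref{obabpsp} to the difference $\tilde u-h$ (exploiting $\tilde\beta\cdot e_{n}\ge\delta_{0}/2$ on the flat portion of $\partial B_{r}^{+}$ and supplying data on the curved portion via the $L^{\infty}$-bound on $\tilde u$), together with a uniform comparison/stability statement for the frozen oblique problem obtained from the convexity of $F$ in $X$ and \eqref{scpro}. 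Once this approximation lemma is in place, the good-$\lambda$ iteration on the maximal function of $|D^{2}\tilde u|^{n}$ and the finite covering of $\overline{\Omega}$ are routine, and assemble into the global estimate with the dependencies stated in the lemma.
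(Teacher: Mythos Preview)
Your approach differs substantially from the paper's. The paper does not redevelop the Calder\'on--Zygmund machinery; it quotes \cite[Theorem 4.6]{MR4046185} to obtain the preliminary bound $\|u\|_{W^{2,p}(\Omega)}\le C(\|u\|_{L^{\infty}(\Omega)}+\|f\|_{L^{p}(\Omega)})$ directly, and then removes the $\|u\|_{L^{\infty}}$ term by a normalization--compactness--contradiction argument: if \eqref{estwint} failed along a sequence, rescale so that $\|\tilde u_k\|_{W^{2,p}}=1$ and $\|\tilde f_k\|_{L^{p}}\to0$, extract a limit $\tilde v$ via Proposition~\ref{win15} solving the homogeneous oblique problem, and invoke the uniqueness in Lemma~\ref{ob_exun} (which relies on \eqref{scpro} with $d>0$) to force $\tilde v\equiv0$, contradicting $\|\tilde u_k\|_{W^{2,p}}=1$. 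Your plan instead tries to get $\|u\|_{L^\infty}\le C\|f\|_{L^p}$ directly and then essentially reprove the content of \cite{MR4046185}.

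There is a concrete gap in your first step. Lemma~\ref{obabpsp} requires a single fixed direction $\xi\in\partial B_1$ with $\beta(x)\cdot\xi\ge\delta_1$ for every $x\in\Gamma$; on a general bounded $C^3$ domain with $\Gamma=\partial\Omega$ no such $\xi$ exists (the hypothesis $\beta\cdot\mathbf{n}\ge\delta_0$ involves the \emph{varying} inner normal), so the global $L^\infty$ bound is not ``immediate'' from that lemma. A global bound does hold, but it comes from the strict properness \eqref{scpro} combined with a boundary maximum principle, and that is precisely what the paper's compactness argument encodes. Second, Lemma~\ref{weha} is a weak Harnack inequality for nonnegative supersolutions $u$, not for $|Du|$, so it does not furnish the $L^p$-gradient control you claim. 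Finally, the program you outline---flattening, frozen-coefficient approximation at the oblique boundary via \cite{MR3780142}, good-$\lambda$ iteration---is correct in spirit but is exactly the paper-length argument of \cite{MR4046185}; it is not something one can carry out inside the proof of this lemma, which is why the paper simply cites that result.
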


\begin{proof} 
According to \cite[Theorem 4.6]{MR4046185}, $u$ belongs to $W^{2,p}(\Omega)$ with
\begin{align} \label{estbh20} ||u||_{ W^{2,p}(\Omega)} \le C (||u||_{ L^{\infty}(\Omega )}+||f||_{ L^{p}( \Omega )}) 
\end{align}
for some $C=C(n, \lambda, \Lambda, p, \delta_{0},b,c, ||\beta||_{C^{2}(\partial \Omega)}, \diam(\Omega),\rho_{0})$.

Therefore, it suffices to obtain the estimate \eqref{estwint}.
To prove this, we argue by contradiction.
Suppose not.
Then there exist sequences $\{ u_{k} \}$ and $\{ f_{k} \}$ such that
$u_{k}$ is the viscosity solution of
\begin{align}
\left\{ \begin{array}{ll}
F(D^{2}u_{k},Du_{k},u_{k},x) = f_{k} & \textrm{in $\Omega, $}\\
\beta \cdot Du_{k}=0 & \textrm{on $\partial \Omega$}\\
\end{array} \right. 
\end{align}
with
\begin{align} \label{divseq} ||u_{k}||_{W^{2,p}(\Omega)} > k ||f_{k}||_{L^{p}(\Omega)} \qquad \textrm{for each }k \ge 1.
\end{align}

Consider $\tilde{u}_{k}=t_{k}^{-1}u_{k}$, $\tilde{f}_{k}=t_{k}^{-1}f_{k}$ and
$ \tilde{F}_{k}(X,q,r,x)=t_{k}^{-1}F(t_{k}X,t_{k}q,t_{k}r,x),$
where $t_{k}=||u_{k}||_{W^{2,p}(\Omega)}$.
Then $\tilde{u}_{k}$ is a viscosity solution of
\begin{align*}
\left\{ \begin{array}{ll}
\tilde{F}_{k}(D^{2}\tilde{u}_{k},D\tilde{u}_{k},\tilde{u}_{k},x) = \tilde{f}_{k} & \textrm{in $\Omega, $}\\
\beta \cdot D\tilde{u}_{k}=0 & \textrm{on $\partial \Omega$.}\\
\end{array} \right. 
\end{align*}
We also see that $\tilde{F}_{k}$ satisfies the assumptions of Lemma \ref{ob_exun} and $ ||\tilde{u}_{k}||_{W^{2,p}(\Omega)} = 1$. 
By \eqref{divseq},
$||\tilde{f}_{k}||_{L^{p}(\Omega)} < 1/k $ and this tends to zero as $k \to \infty$. 
Moreover, by weak compactness theorem, we can extract a proper subsequence $\{ \tilde{u}_{k_{j}} \} \subset \{ \tilde{u}_{k} \}$
such that 
\begin{align*}
\left\{ \begin{array}{ll}
\tilde{u}_{k_{j}} \rightharpoonup \tilde{v}  & \textrm{in $W^{2,p}(\Omega), $}\\
\tilde{u}_{k_{j}}\to \tilde{v} & \textrm{in $W^{1,p}(\Omega)$}\\
\end{array} \right. 
\end{align*}
for some $\tilde{v} \in W^{2,p}(\Omega)$.
Since $p>n$, we also observe that $W^{1,p}(\Omega) \subset \subset C(\overline{\Omega})$
and this yields $\tilde{u}_{k_{j}} \to \tilde{v}$ in $C(\overline{\Omega})$.
Moreover, we also observe that for each $j$, $ \tilde{u}_{k_{j}}  \in C^{0,\alpha_{0}}(\partial \Omega)$ for some $0<\alpha_{0}<1-n/p$ and $|| D\tilde{u}_{k_{j}} ||_{L^{\infty}(\partial \Omega)} \le C$ for some $C=C(n,p,\Omega)>0$.
Then, from Arzel\'{a}-Ascoli criterion, we get
$$ D\tilde{u}_{k_{j}} \to D\tilde{v} \qquad \textrm{on} \ \partial \Omega.$$
Hence, by using Proposition \ref{win15}, we see that $v$ is a viscosity solution of
\begin{align} \label{oblemlim} 
\left\{ \begin{array}{ll}
\tilde{F}(D^{2}\tilde{v},D\tilde{v},\tilde{v},x) = 0 & \textrm{in $\Omega, $}\\
\beta \cdot D\tilde{v}=0 & \textrm{on $\partial \Omega$}\\
\end{array} \right. 
\end{align}
for some $\tilde{F}=\tilde{F}(X,q,r,x)$.
Here we can check that  $\tilde{F}$ also satisfies \eqref{scpro}-\eqref{ob_sc} and \eqref{sc_add}.

Now we deduce that $\tilde{v} \equiv 0$ solves \eqref{oblemlim}.
Then $\tilde{v}$ is the unique solution of \eqref{oblemlim} by Lemma \ref{ob_exun}.
But, in this case, we get
\begin{align*}
1 = ||\tilde{u}_{k_{j}}||_{W^{2,p}(\Omega)}  \le C(||\tilde{u}_{k_{j}}||_{ L^{\infty}(\Omega )}+||\tilde{f}_{k_{j}}||_{ L^{p}( \Omega )})  \to 0 \qquad \textrm{as} \ j \to \infty ,
\end{align*}
which is a contradiction.
\end{proof}

In the above lemma, we have assumed \eqref{sc_add}, which says that $F$ and $f$ are at least continuous in $x$.
By using mollification, we can relax this assumption.
\begin{lemma} \label{ob_mthmgen}
Assume that $F=F(X,q,r,x)$ is convex in $X$ and measurable in $x$, satisfies \eqref{scpro}-\eqref{ob_sc} and $F(0,0,0,x) \equiv 0$,  $ \partial \Omega \in C^{3}$, $f \in L^{p}(\Omega)$ for $p>n$, $\beta \in C^{2}(\partial \Omega)$ with $\beta \cdot \mathbf{n} \ge \delta_{0}$ for some $\delta_{0} > 0$. 
Then there exists a unique viscosity solution $u$ of \eqref{elobori} with the estimate \eqref{estwint}.
\end{lemma}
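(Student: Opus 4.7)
My plan is to approximate $F$ and $f$ by data that are continuous in $x$, apply Lemma \ref{ob_mthmed} to the regularised problems, and pass to the limit. Concretely, after extending $F$ off $\Omega$ in a way that preserves all the structural hypotheses (for instance, by freezing to $F(X,q,r,\pi(y))$ for $y \notin \Omega$, where $\pi$ is the nearest-point projection onto $\overline{\Omega}$), I would mollify in $x$:
\[
F_\eps(X,q,r,x) := \int F(X,q,r,x-y)\,\rho_\eps(y)\,dy, \qquad f_\eps := f * \rho_\eps,
\]
with $\rho_\eps$ a standard mollifier on $\R^n$. Since all the hypotheses on $F$ in Lemma \ref{ob_mthmed} are either linear in $F$ or structural inequalities preserved by convex combinations, $F_\eps$ inherits convexity in $X$, \eqref{scpro}, \eqref{ob_sc}, and the normalization $F_\eps(0,0,0,\cdot) \equiv 0$. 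Smoothness of $F_\eps$ in $x$ together with \eqref{ob_sc} yields the joint-continuity condition \eqref{sc_add}--\eqref{sc_add2} required by Lemma \ref{ob_exun}, and $f_\eps \in C(\overline{\Omega})\cap L^p(\Omega)$ with $f_\eps \to f$ in $L^p(\Omega)$.

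The step I expect to be the main obstacle is verifying that the smallness assumption \eqref{smallbmo} survives mollification with an $\eps$-independent constant, since the quantitative estimate \eqref{estwint} in Lemma \ref{ob_mthmed} depends crucially on this smallness. Jensen's inequality gives the pointwise bound
\[
\Theta_{F_\eps}(x,x_0) \le \int \Theta_F(x-y, x_0-y)\,\rho_\eps(y)\,dy,
\]
which I would pair with Fubini and a change of variables to dominate the $L^n$-average of $\Theta_{F_\eps}$ on $B_\rho(x_0) \cap \Omega$ by a weighted average of $L^n$-averages of $\Theta_F$ over translated balls. For $\eps$ small relative to $\rho_0$, this is controlled by the same constant appearing in \eqref{smallbmo} (possibly with $\rho_0$ replaced by $\rho_0/2$). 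Lemma \ref{ob_mthmed} then yields a unique $u_\eps \in W^{2,p}(\Omega)$ solving the regularised oblique problem with $\|u_\eps\|_{W^{2,p}(\Omega)} \le C\|f_\eps\|_{L^p(\Omega)}$, where $C$ is independent of $\eps$.

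Finally I would pass to the limit. The uniform $W^{2,p}$-bound and the compact embedding $W^{2,p}(\Omega) \subset\!\subset C^1(\overline{\Omega})$ valid for $p>n$ let me extract a subsequence $u_{\eps_j} \rightharpoonup u$ weakly in $W^{2,p}(\Omega)$ and strongly in $C^1(\overline{\Omega})$; uniform convergence of $Du_{\eps_j}$ on $\partial\Omega$ transfers the oblique boundary condition to $u$. The interior equation passes to the limit via Proposition \ref{win15}: for any $\varphi \in W^{2,p}$ the hypothesis \eqref{obcc} reduces to $L^p$-convergence of $F_{\eps_j}(D^2\varphi, D\varphi, u_{\eps_j}, \cdot) - f_{\eps_j}$ to $F(D^2\varphi, D\varphi, u, \cdot) - f$, which follows from uniform convergence of $u_{\eps_j}$, the Lipschitz structure \eqref{ob_sc} in $(q,r)$, and standard mollifier convergence applied with $(X,q,r)$ frozen. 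Weak lower semicontinuity of the $W^{2,p}$-norm then yields \eqref{estwint}. Uniqueness follows by applying the ABP estimate of Lemma \ref{obabpsp} to the difference of two solutions, using the strict monotonicity \eqref{scpro} in $r$ to absorb the zero-order term into the right-hand side.
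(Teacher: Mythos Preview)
Your approach matches the paper's: mollify $F$ and $f$ in $x$, apply Lemma~\ref{ob_mthmed} to obtain uniformly bounded $u_\eps\in W^{2,p}(\Omega)$, and pass to the limit via the compact embedding $W^{2,p}\subset\!\subset C^{1,\alpha}$ together with Proposition~\ref{win15}; the paper simply extends $F$ and $f$ by zero outside $\Omega$ and cites \cite[Theorem~4.3]{MR2486925} for the preservation of \eqref{smallbmo} under mollification, so your more careful projection-based extension is fine but not needed. One caveat on uniqueness: Lemma~\ref{obabpsp} requires a \emph{single} direction $\xi\in\partial B_1$ with $\beta(x)\cdot\xi\ge\delta_1$ on all of $\Gamma$, which is not available on the full boundary of a general $C^3$ domain, so your ABP argument as stated does not directly apply with $\Gamma=\partial\Omega$; the paper instead appeals to the comparison principle Lemma~\ref{obcom}, proved immediately afterward via a localized argument combining ABP with the weak Harnack inequality.
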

\begin{proof}
Fix $\epsilon>0$.
With a standard mollifier $\varphi$ having $\supp \varphi \subset B_{1}$,
we define $\varphi_{\epsilon} (x)=\epsilon^{-n} \varphi (x / \epsilon )$.
Then we set $f^{\epsilon}(x)=(f \ast \varphi_{\epsilon})(x)$ and
$$ F^{\epsilon}(X,q,r,x)= (F(X,q,r,\cdot)\ast \varphi_{\epsilon})(x).$$
Note that we extended $F$ and $f$ to zero outside $\Omega$ here. 
Then one can check that $ f^{\epsilon}\in L^{p}(\mathbb{R}^{n}) \cap C^{\infty}(\mathbb{R}^{n})$ and
$F^{\epsilon}$ is convex in $X$ and $F^{\epsilon}(0,0,0,x) \equiv 0$.
Furthermore, we also observe that $F^{\epsilon}$ satisfies \eqref{scpro}-\eqref{ob_sc}, \eqref{sc_add} and \eqref{smallbmo}
(see the proof of \cite[Theorem 4.3]{MR2486925}).
Consider the following problem
\begin{align} \label{applwoo}
\left\{ \begin{array}{ll}
F^{\epsilon}(D^{2}u_{\epsilon},Du_{\epsilon},u_{\epsilon},x) = f^{\epsilon} & \textrm{in $\Omega, $}\\
\beta \cdot Du_{\epsilon}=0 & \textrm{on $\partial \Omega$.}\\
\end{array} \right. 
\end{align}
Then applying Lemma \eqref{ob_mthmed} to $u_{\epsilon}$, there exists the unique solution $u_{\epsilon}$ of \eqref{applwoo} with
\begin{align*} 
||u_{\epsilon}||_{ W^{2,p}(\Omega)} \le C||f^{\epsilon}||_{ L^{p}( \Omega )}
\end{align*}
for some $C=C(n, \lambda, \Lambda, p, \delta_{0},b,c, ||\beta||_{C^{2}(\partial \Omega)}, \diam(\Omega),\rho_{0})$.

Since $W^{2,p}(\Omega) \subset \subset C^{ 1, \alpha} (\overline{\Omega})$ with $0< \alpha < 1-n/p$ by Sobolev imbedding,
we have 
$\{ u_{\epsilon} \}_{\epsilon >0} $ is uniformly bounded in $C^{1,\alpha_{0}} (\overline{\Omega})$ for any small $\epsilon>0$ and some $0<\alpha_{0}<1-n/p$.
Thus, by using Arzel\'{a}-Ascoli criterion, we can obtain that there exists a function $v$ with
$$ u_{\epsilon_{j}} \to v \qquad \textrm{uniformly in}\ \overline{\Omega} $$
for some subsequence $\{ u_{\epsilon_{j}} \} \subset \{ u_{\epsilon} \}$.  
Again, applying Proposition \ref{win15} to $v$, we can derive that
$v$ is a viscosity solution of \eqref{elobori}
with
\begin{align*} 
||v||_{ W^{2,p}(\Omega)} \le C||f||_{ L^{p}( \Omega )}
\end{align*}
for some $C=C(n, \lambda, \Lambda, p, \delta_{0},b,c, ||\beta||_{C^{2}(\partial \Omega)}, \diam(\Omega),\rho_{0})$.
The uniqueness can be deduced by Lemma \ref{obcom} below.
\end{proof}

%On the other hand, we observe that
%$$ \beta \cdot D\psi^{\epsilon_{j}} = \beta \cdot (D\psi \ast \varphi_{\epsilon_{j}}).$$
%Since $\psi \in W^{2,p}(\Omega)$ with $p>n$, it can be seen that $D\psi  \in C( \overline{\Omega})$.
%Thus, we can observe that $D\psi^{\epsilon_{j}} $ converges uniformly to $D\psi $ in $B_{\delta}(x)$ for some sufficiently small $\delta$. 
%Hence,
%\begin{align*}
%\beta (x) \cdot  (D\psi \ast \varphi_{\epsilon_{j}})(x) &
%= \beta (x) \cdot \int_{B_{\epsilon_{j}}(x)} D \psi (y) \varphi_{\epsilon_{j}}(x-y) dy \\ &
%= \int_{B_{\epsilon_{j}}(x)} ( \beta (x) \cdot  D \psi (y) )\varphi_{\epsilon_{j}}(x-y) dy
%\end{align*}
%for every $x \in \partial \Omega$.
%Combining the assumption $\beta \cdot D \psi \ge \nu >0$ with the uniform convergence of $\psi^{\epsilon_{j}}$,
%we obtain $\beta (x) \cdot  D \psi (y) \ge 0$ in $B_{\epsilon_{j}}(x) $ for large $j$.
%Therefore, we also obtain $\beta \cdot D\psi^{\epsilon_{j}} \ge 0$ for any sufficiently large $j$.

Meanwhile, a comparison principle for \eqref{elobori} can be also obtained as in the case of Dirichlet problems,
see \cite[Theorem 2.10]{MR1376656}. 
This will be used to prove Theorem \ref{obob_mthm} in the next section. 
\begin{lemma}\label{obcom} 
Let $\Omega_{0} \subset \Omega$, and let $\Gamma \in C^{2}$ be relatively open in $\partial \Omega$.
Suppose that  $F=F(X,q,r,x)$ is convex in $X$ and continuous in $x$, satisfies \eqref{scpro}-\eqref{sc_add} and $F(0,0,0,x) \equiv 0$, $\beta \in C^{2}(\overline{\Gamma}) $ with $\beta \cdot \mathbf{n} \ge \delta_{0}$ for some $\delta_{0}>0$, $\psi \in C(\partial \Omega_{0} \backslash \Gamma)$ and $f \in L^{p}(\Omega_{0})$ for $n< p < \infty$.

Let $u_{1}, u_{2} \in W^{2,p}(\Omega) \cap C(\overline{\Omega})$ satisfy
\begin{align*}
\left\{ \begin{array}{ll}
F(D^{2}u_{1},Du_{1}, u_{1},x) \le f & \textrm{in $\Omega_{0}, $}\\
u_{1}\ge \psi & \textrm{on $\partial \Omega_{0} \backslash \Gamma$,} \\
\beta \cdot Du_{1} \le 0 & \textrm{on $ \Gamma$}\\
\end{array} \right. 
\end{align*}
and
\begin{align*}
\left\{ \begin{array}{ll}
F(D^{2}u_{2},Du_{2},u_{2},x) \ge f & \textrm{in $\Omega_{0}, $}\\
u_{2}\le \psi & \textrm{on $\partial \Omega_{0} \backslash \Gamma$,} \\
\beta \cdot Du_{2} \ge 0 & \textrm{on $ \Gamma$ }\\
\end{array} \right. 
\end{align*}
in the viscosity sense.
Then we have $u_{1} \ge u_{2}$ in $\Omega_{0}$. 
\end{lemma}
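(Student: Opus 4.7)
The plan is to apply the ABP-type maximum principle of Lemma \ref{obabpsp} to $v := u_1 - u_2$ (equivalently, to control $w := u_2-u_1=-v$ from above). The boundary data are already in the right form: on $\partial \Omega_0 \setminus \Gamma$ the assumptions give $u_2 \le \psi \le u_1$, hence $v \ge 0$ and $v^{-} = 0$; on $\Gamma$, subtracting $\beta\cdot Du_1 \le 0$ from $\beta\cdot Du_2 \ge 0$ yields $\beta \cdot Dv \le 0$, so we may take $g \equiv 0$. The strict obliqueness $\beta\cdot \mathbf{n} \ge \delta_0$ supplies a fixed direction $\xi$ required by Lemma \ref{obabpsp}, possibly after localizing along $\Gamma$.

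The heart of the proof is to show that $v$ lies in $\overline{S}(\lambda,\Lambda,b,0)$ on $\{v<0\}$. Writing
$$F(D^2 u_2,Du_2,u_2,x) - F(D^2 u_1,Du_1,u_1,x) = A + B,$$
with
$$A := F(D^2 u_2,Du_2,u_2,x) - F(D^2 u_1,Du_1,u_2,x), \qquad B := F(D^2 u_1,Du_1,u_2,x) - F(D^2 u_1,Du_1,u_1,x),$$
the structure condition \eqref{ob_sc} gives $A \le \mathcal{M}^{+}(\lambda,\Lambda,D^2 w) + b|Dw|$, while the strict monotonicity \eqref{scpro} gives $B \le -dw$ on $\{w \ge 0\}$. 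Since $u_1$ is a supersolution and $u_2$ a subsolution, $A+B \ge 0$, and hence
$$\mathcal{M}^{+}(\lambda,\Lambda,D^2 w) + b|Dw| \ge d w > 0 \qquad \text{in } \{w>0\}.$$
Passing to $v=-w$, this reads $\mathcal{M}^{-}(\lambda,\Lambda,D^2 v) - b|Dv| \le dv \le 0$ in $\{v<0\}$, i.e., $v \in \overline{S}(\lambda,\Lambda,b,0)$ on the open set carrying $v^{-}$.

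At the viscosity level, the formal computation above is made rigorous by the standard Ishii--Jensen doubling of variables, exactly as in the Dirichlet proof of \cite[Theorem 2.10]{MR1376656}: one introduces $\Phi_{\alpha}(x,y) = u_2(x) - u_1(y) - \tfrac{\alpha}{2}|x-y|^2$, applies the theorem of sums to produce symmetric matrices $X_1,X_2$ satisfying \eqref{sc_add2}, and lets $\alpha\to\infty$ using the continuity condition \eqref{sc_add}. The main technical obstacle, and the only genuine new point beyond the Dirichlet argument, is handling doubled maxima that concentrate on the oblique boundary $\Gamma$. Here one uses the $C^2$-regularity of $\beta$ and $\partial\Omega$ together with the strict obliqueness $\beta\cdot \mathbf{n} \ge \delta_0 > 0$ to construct a small oblique corrector of the form $\eta(x) = \epsilon\,\beta(\pi_{\Gamma}(x))\cdot(x-\pi_{\Gamma}(x))$, where $\pi_{\Gamma}$ is the nearest-point projection onto $\Gamma$; replacing $u_2$ by $u_2 - \eta$ forces the doubled maximum into $\Omega_0$, so that the theorem of sums applies in the interior and the computation above goes through. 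Once $v\in \overline{S}(\lambda,\Lambda,b,0)$ is established, Lemma \ref{obabpsp} gives
$$\sup_{\Omega_0} v^{-} \le \sup_{\partial \Omega_0 \setminus \Gamma} v^{-} + C\bigl(\|0\|_{L^n(\Omega_0)} + 0\bigr) = 0,$$
which is exactly $u_1 \ge u_2$ in $\Omega_0$.
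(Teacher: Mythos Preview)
Your overall plan---show that $v=u_1-u_2$ belongs to $\overline S(\lambda,\Lambda,b,0)$ on $\{v<0\}$ and then invoke the oblique ABP estimate of Lemma~\ref{obabpsp}---has a genuine gap at the second step. Lemma~\ref{obabpsp} requires a \emph{single} direction $\xi\in\partial B_1$ with $\beta(x)\cdot\xi\ge\delta_1$ for every $x\in\Gamma$. The obliqueness hypothesis $\beta\cdot\mathbf n\ge\delta_0$ does \emph{not} supply such a $\xi$: on a curved $\Gamma$ (e.g.\ $\Gamma=\partial\Omega$ with $\beta=\mathbf n$) no uniform direction exists. Your parenthetical ``possibly after localizing along $\Gamma$'' does not repair this: localizing to a patch where $\mathbf n$ is nearly constant creates an artificial Dirichlet boundary in the interior of $\Omega_0$ on which $v^-$ is not controlled, so the local applications of Lemma~\ref{obabpsp} cannot be chained into a global bound. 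In particular, when the negativity set $V=\{v<0\}$ satisfies $\partial V\subset\Gamma$, your scheme yields no information at all.

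The paper handles exactly this difficulty by a different mechanism. It first uses the \emph{Dirichlet} ABP estimate to push the minimum of $w=u_1-u_2$ to $\partial V$; since $w\ge0$ on $\partial V\setminus\Gamma$, the minimum sits on $\Gamma$. There a local Hopf-type argument (which does use the obliqueness only near one boundary point, where a direction $\xi$ is available) forces an interior touching point for $\tilde w=w-\inf_V w$, and then the weak Harnack inequality (Lemma~\ref{weha}) propagates $\tilde w\equiv0$ throughout $V$, contradicting $\inf_V w<0$. In the residual case $\partial V\subset\Gamma$ the same propagation yields $w\equiv c_1<0$ on $V$, and the strict monotonicity constant $d>0$ in \eqref{scpro} is then used in an essential way to derive $F(D^2u_1,Du_1,u_1,x)>f$, a contradiction. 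Your argument never exploits $d>0$ beyond obtaining the right sign in $\overline S$, which is why it cannot close this case.

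A minor point: the Ishii--Jensen doubling and the oblique corrector $\eta$ are unnecessary detours here. Since $u_1,u_2\in W^{2,p}(\Omega)$ with $p>n$, the structure computation $A+B\ge0$ holds a.e.\ and the membership $w\in\overline S(\lambda,\Lambda,b,0)$ on $\{w<0\}$ follows directly, exactly as the paper does. The doubling machinery would be needed only for merely continuous viscosity sub/supersolutions, and in any event it does not address the global $\xi$ obstruction above.
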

\begin{proof}
First we set 
$$ G(X,q,r,x)=F(X+D^{2}u_{2},q+Du_{2},r+u_{2},x)-F(D^{2}u_{2},Du_{2},u_{2},x).$$
One can see that $G$ satisfies \eqref{scpro}-\eqref{sc_add}, $w :=u_{1}-u_{2}$ solves
\begin{align*}
G(D^{2}w,Dw,w,x)=F(D^{2}u_{1},Du_{1},u_{1},x)-F(D^{2}u_{2},Du_{2},u_{2},x) \le 0
\end{align*}
in the viscosity sense,
and that
$ w \in \overline{S}(\lambda , \Lambda, b, -g)$ for
$$ g(x)=  G(0,0,w,x)=F(D^{2}u_{2},Du_{2},u_{1},x)-F(D^{2}u_{2},Du_{2},u_{2},x).$$

Set $$V:=\{ x \in \overline{\Omega}_{0} : w(x) <0 \} = \{x \in \overline{\Omega}_{0} :  u_{1}(x) < u_{2}(x) \} .$$
We want to claim that $V = \varnothing$.
Suppose not.
Then we have $\inf_{V}w <0$.
Since
$$g(x) > d(u_{2}-u_{1})(x) = -dw(x) >0 \qquad \textrm{for} \ x \in V , $$
$w$ satisfies
\begin{align*}
\left\{ \begin{array}{ll}
w \in \overline{S}(\lambda, \Lambda, b,0) & \textrm{in $V, $}\\
w\ge0 & \textrm{on $\partial V \backslash \Gamma$,} \\
\beta \cdot Dw \le 0 & \textrm{on $ \Gamma$}\\
\end{array} \right. 
\end{align*}
in the viscosity sense, according to \cite[Theorem 3.1]{MR3780142}. 

We first consider the case $\partial V \backslash \Gamma \neq \varnothing$. 
Observe that $w \equiv 0$ on $\partial V \cap \Omega_{0}$ and
$ w \ge 0$ on $(\partial V \cap \partial \Omega_{0} ) \backslash \Gamma$. 
From ABP maximum principle (see \cite[Proposition 3.3]{MR1376656}), we can deduce that
$$ \sup_{V} w^{-} =  \sup_{\partial V} w^{-}.$$
Thus, $w$ attains a minimum point $x_{0}$ on $\partial V \cap \Gamma$.

Now define $\tilde{w}(x) = w(x) -\inf_{V} w$ for $x \in V$.
Then we see that $\tilde{w} \ge 0 $ in $\overline{V}$ with $\tilde{w}(x_{0})=0$.
Consider a small neighborhood $N(x_{0}) \subset \Omega \cup \Gamma$ of $x_{0}$.
According to \cite[Proposition 11]{MR2813890}, we can deduce that  there exists a point $x_{1} \in N(x_{0})\cap \Omega$ with 
$\tilde{w}(x_{1})=0$ since  $\beta \cdot Dw \le 0$.
Then, by Lemma \ref{weha}, there exists a small number $\rho>0$ with $B_{\rho}(x_{1}) \subset \subset \Omega$ such that
$$ ||\tilde{w}||_{L^{p_{0}}(B_{\rho/2}(x_{1}))} =0 .$$
That is,
$$ \tilde{w} \equiv 0  \qquad \textrm{in}\quad B_{\rho/2}(x_{1}).$$
Repeating the above procedure, we can deduce that 
$$ \tilde{w} \equiv 0  \qquad \textrm{in}\quad V.$$
Since $\tilde{w}$ is continuous in $\overline{V}$, we have
$$ \tilde{w} \equiv 0  \qquad \textrm{in}\quad \overline{V}, $$
and this implies $w= \tilde{w}$.
However, it is a contradiction, as we have assumed that
$ \inf_{V}w <0.$
Hence, $V = \varnothing$ and we conclude that
$$ w \ge 0 \qquad \textrm{in}\quad \Omega_{0},$$
and this implies
$$ u_{1} \ge u_{2} \qquad \textrm{in}\quad \Omega_{0}.$$

On the other hand, if $ \Gamma = \partial V $, we have
\begin{align*}
\left\{ \begin{array}{ll}
w \in \overline{S}(\lambda, \Lambda, b,0) & \textrm{in $V, $}\\
\beta \cdot Dw \le 0 & \textrm{on $ \Gamma$.}\\
\end{array} \right. 
\end{align*}
Again, by ABP maximum principle, we also have 
$$ \sup_{V} w^{-} =  \sup_{\partial V} w^{-}<0.$$
Set $\tilde{w}  = w -\inf_{V} w$ in $V$.
Then there is a point $y_{0} \in \partial V$ with $\tilde{w}(y_{0})=0$.
From Lemma \ref{obabpsp}, we can see that there is an interior point $y_{1}$ such that $\tilde{w}(y_{1})=0$.
Now we derive that $\tilde{w} \equiv 0$ in V by using a similar argument as above.
This yields that $w \equiv c_{1} $ in $V$ for some $c_{1}<0$.

By the definition of $w$, we have $u_{1}\equiv u_{2}+c_{1}$ in $V$.
Then we can observe that
\begin{align*}
F(D^{2}u_{1},Du_{1}, u_{1},x)&=F(D^{2}(u_{2}+c_{1}),D(u_{2}+c_{1}), u_{2}+c_{1},x)\\
& =F(D^{2}u_{2},Du_{2}, u_{2}+c_{1},x) \\
& \ge F(D^{2}u_{2},Du_{2}, u_{2},x) - dc_{1} \\
& > f
\end{align*}
in $V$.
But it is a contradiction because $F(D^{2}u_{1},Du_{1}, u_{1},x) \le f$.
Therefore, we conclude that $V = \varnothing$.
This completes the proof.
\end{proof}

%\begin{remark}
%In \cite{MR3780142} and \cite{MR4046185}, the case of $C^{2}$-viscosity solutions was only considered. 
%But those results can be extended to the case of $W^{2,p}$-viscosity solutions without difficulty.
%\end{remark}
\section{Proof of Theorem \ref{obob_mthm}}

In this section we establish our main result, Theorem \ref{obob_mthm}.
Our strategy is to construct a sequence of approximating oblique derivative problems to \eqref{eloblobs}.
This construction makes it possible to utilize those results obtained in the previous section for \eqref{elobori}.

In the process of the proof, we are going to use the following Schauder's fixed point theorem (see  \cite[Theorem V.9.5]{MR768926}).
\begin{lemma}[Schauder's fixed point theorem] \label{sfpt}
Assume that $X$ is a Banach space, $K \subset X$ is closed, bounded and convex, and suppose that $S:K \to K$ is compact.
Then $S$ has a fixed point in $K$.
\end{lemma}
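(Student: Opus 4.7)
The plan is the classical reduction to Brouwer's fixed point theorem via finite-dimensional approximation. Because $S : K \to K$ is compact, the image $S(K)$ is relatively compact, hence totally bounded in $X$. This is exactly what lets us replace the infinite-dimensional target with a sequence of compact convex finite-dimensional sets where Brouwer's theorem directly applies, and then recover a genuine fixed point by passing to the limit.

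First I would construct the Schauder projection. For each $\epsilon>0$, cover $\overline{S(K)}$ by finitely many balls $B_\epsilon(y_1),\ldots,B_\epsilon(y_N)$ with centers $y_i \in S(K)\subset K$, and set
\[ m_i(y) := \max\{0,\ \epsilon - \|y-y_i\|\}, \qquad P_\epsilon(y) := \frac{\sum_{i=1}^N m_i(y)\, y_i}{\sum_{i=1}^N m_i(y)}. \]
The denominator is strictly positive on $\overline{S(K)}$ since the $B_\epsilon(y_i)$ cover it, so $P_\epsilon$ is continuous on $\overline{S(K)}$ and takes values in $C_\epsilon := \mathrm{conv}(y_1,\ldots,y_N)$. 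Convexity of $K$ forces $C_\epsilon \subset K$. A one-line estimate gives the key approximation property $\|P_\epsilon(y)-y\|\le \epsilon$ for all $y\in \overline{S(K)}$, since only terms with $\|y-y_i\|<\epsilon$ actually contribute to the convex combination.

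Next I would invoke Brouwer. The composition $S_\epsilon := P_\epsilon\circ S$ maps $C_\epsilon$ continuously into $C_\epsilon$, and $C_\epsilon$ is a compact convex subset of a finite-dimensional affine subspace of $X$, hence homeomorphic to a closed ball in some Euclidean space. Brouwer supplies a fixed point $x_\epsilon \in C_\epsilon \subset K$ with $x_\epsilon = P_\epsilon(S x_\epsilon)$, and therefore
\[ \|x_\epsilon - S x_\epsilon\| = \|P_\epsilon(S x_\epsilon) - S x_\epsilon\| \le \epsilon. \]

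Finally I would pass to the limit along a sequence $\epsilon_k\downarrow 0$. The points $\{S x_{\epsilon_k}\}\subset \overline{S(K)}$ lie in a compact set, so along a subsequence $S x_{\epsilon_k}\to x^*$ for some $x^*\in \overline{S(K)}\subset K$ (using closedness of $K$). The bound $\|x_{\epsilon_k}-Sx_{\epsilon_k}\|\le \epsilon_k$ then forces $x_{\epsilon_k}\to x^*$ along the same subsequence, and continuity of $S$ yields $S x^* = x^*$. The main technical step is the construction of the Schauder projection and the verification of its continuity together with the $\epsilon$-approximation property; granted Brouwer's theorem, the rest is a routine compactness extraction.
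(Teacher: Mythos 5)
Your argument is correct: it is the standard proof of Schauder's theorem via the Schauder projection and Brouwer's fixed point theorem, and all the key steps are in place (the partition-of-unity projection $P_\epsilon$ with the estimate $\|P_\epsilon(y)-y\|\le\epsilon$, the inclusion $C_\epsilon\subset K$ by convexity, Brouwer on the finite-dimensional compact convex set $C_\epsilon$, and the compactness extraction using continuity of $S$). For comparison, the paper does not prove this lemma at all: it is quoted as a known result with a reference to Conway's functional analysis text (Theorem V.9.5 there), so your proposal supplies a self-contained argument where the paper relies on a citation. Two cosmetic remarks: you only ever apply $P_\epsilon$ to points of $S(K)$ (namely $Sx$ for $x\in C_\epsilon\subset K$), so it suffices to choose the finite $\epsilon$-net covering $S(K)$ itself, which avoids the small fuss about whether balls centered in $S(K)$ cover the closure; and, as always with this statement, $K$ must be nonempty for the conclusion to make sense, which is implicit in the application in the paper.
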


We now prove the main result of this paper.

\begin{proof}[Proof of Theorem \ref{obob_mthm}]
Fix $\epsilon>0$ and choose a non-decreasing function $\Phi_{\epsilon} \in C^{\infty}(\mathbb{R})$ such that
\begin{align} \label{apfcon1} \Phi_{\epsilon}(s)\equiv0 \quad \textrm{if}  \quad s \le 0 ;  \qquad
 \Phi_{\epsilon}(s)\equiv1 \quad \textrm{if}  \quad s \ge \epsilon ,
 \end{align}
and
\begin{align} \label{apfcon2}  0 \le \Phi_{\epsilon}(s) \le 1 \quad \textrm{for any} \ \, s \in \mathbb{R} .\end{align}

Set $$g(x) := f(x)-F(D^{2}\psi, D\psi, \psi, x).$$
Then we have $g \in L^{p}(\Omega) $ with the estimate 
\begin{align} \label{obsgest} \begin{split}
||g||_{ L^{p}(\Omega)} & \le ||f||_{ L^{p}(\Omega)} + ||F(D^{2} \psi,D\psi, \psi, x)||_{ L^{p}(\Omega)}
\\ & \le C( ||f||_{ L^{p}(\Omega)} + ||\psi||_{ W^{2,p}(\Omega)} ) \end{split}
\end{align}
for some $C=C(n, \lambda, \Lambda,b,c)>0$, since $f, D^{2}\psi \in  L^{p}(\Omega) $ and $F$ satisfies 
\eqref{ob_sc}.

We now consider the following oblique derivative problem without obstacles
\begin{align} \label{oblobsapp}
\left\{ \begin{array}{ll}
F(D^{2}u_{\epsilon},Du_{\epsilon},u_{\epsilon},x) = g^{+} \Phi_{\epsilon}(u_{\epsilon}-\psi)+ f- g^{+} & \textrm{in $\Omega, $}\\
\beta \cdot Du_{\epsilon}=0 & \textrm{on $\partial \Omega$.}\\
\end{array} \right. 
\end{align}
We want to show that \eqref{oblobsapp} has a unique viscosity solution.
For this, fix a function $v_{0} \in L^{p}(\Omega)$.
Then according to Lemma \ref{ob_mthmgen},
we know that there exists a unique viscosity solution $v_{\epsilon} \in W^{2,p}(\Omega)$ of
\begin{align*} 
\left\{ \begin{array}{ll}
F(D^{2}v_{\epsilon},Dv_{\epsilon},v_{\epsilon},x) = g^{+} \Phi_{\epsilon}(v_{0}-\psi)+ f - g^{+} & \textrm{in $\Omega, $}\\
\beta \cdot Dv_{\epsilon}=0 & \textrm{on $\partial \Omega$}
\end{array} \right. 
\end{align*}
with the estimate 
\begin{align*}
||v_{\epsilon}||_{ W^{2,p}(\Omega)} & \le C|| g^{+} \Phi_{\epsilon}(v_{0}-\psi)+ f - g^{+} ||_{ L^{p}(\Omega)}
\\ & \le C(|| f ||_{ L^{p}(\Omega)}+||g ||_{ L^{p}(\Omega)})
\\& \le C(|| f ||_{ L^{p}(\Omega)}+||\psi ||_{ W^{2,p}(\Omega)})
\end{align*}
for some $C=C(n, \lambda, \Lambda, p, \delta_{0},b,c,   ||\beta||_{C^{2}(\partial \Omega)},\rho_{0}, \diam(\Omega))>0$, where we have used \eqref{apfcon1} and \eqref{apfcon2}.
Thus, $$||v_{\epsilon}||_{ W^{2,p}(\Omega)}   \le C_{0} $$ for some 
$$C_{0}=C_{0}(n, \lambda, \Lambda, p, \delta_{0},b,c,||\beta||_{C^{2}(\partial \Omega)} \diam(\Omega), || f||_{ L^{p}(\Omega)},||\psi||_{ L^{p}(\Omega)},\rho_{0}).$$
Note that $C_{0}$ is independent of $v_{0}$.
Now we can define a nonlinear operator  $S_{\epsilon}:L^{p}(\Omega) \to W^{2,p}(\Omega) \subset L^{p}(\Omega)$ such that $S_{\epsilon}v_{0}=v_{\epsilon}$ with \eqref{oblobsapp}.
Write
$$K:= \{ h \in L^{p}(\Omega) : ||h||_{ L^{p}(\Omega) } \le C_{0} \}.$$
Note that $K$ is a closed convex subset of $L^{p}(\Omega)$. 
On the other hand, by Rellich-Kondrachov compactness theorem, we observe that $W^{2,p}(\Omega)$ is compactly imbedded in $W^{1,p}(\Omega)$ and so is in $L^{p}(\Omega)$. 
Hence, the closure of $S_{\epsilon}(A)$ is compact for every $A \subset K$. 
Meanwhile, by Proposition \ref{win15}, we can also conclude that $S_{\epsilon}$ is a continuous operator.
%Now we need to check that $S$ is continuous.
%Consider a sequence $\{ v_{k} \} \subset L^{p}(\Omega)$ which converges to a function $v$ in $L^{p}(\Omega)$
%and let $w_{k}$ and $w$ be solutions of 
%\begin{align*} 
%\left\{ \begin{array}{ll}
%F(D^{2}w_{k},x) = g^{+} \Phi_{\epsilon}(v_{k}-\psi)+ f - g^{+} & \textrm{in $\Omega, $}\\
%\beta \cdot Dw_{k}=0 & \textrm{on $\partial \Omega$,}\\
%\int_{\Omega}w_{k} \ dx = 0 \\
%\end{array} \right. 
%\end{align*}
%and
%\begin{align*} 
%\left\{ \begin{array}{ll}
%F(D^{2}w,x) = g^{+} \Phi_{\epsilon}(v-\psi)+ f - g^{+} & \textrm{in $\Omega, $}\\
%\beta \cdot Dw=0 & \textrm{on $\partial \Omega$,}\\
%\int_{\Omega}w \ dx = 0 ,\\
%\end{array} \right. 
%\end{align*}
%respectively.
%Then we see that 
%\begin{align*} 
%\left\{ \begin{array}{ll}
%w_{k}-w \in S(\lambda, \Lambda, g^{+} \Phi_{\epsilon}(v_{k}-\psi)-g^{+} \Phi_{\epsilon}(v-\psi)) & \textrm{in $\Omega, $}\\
%\beta \cdot D(w_{k}-w)=0 & \textrm{on $\partial \Omega$,}\\
%\int_{\Omega}(w_{k}-w) \ dx = 0 .\\
%\end{array} \right. 
%\end{align*}
%Now we observe that 
%$$|| g^{+} \Phi_{\epsilon}(v_{k}-\psi)-g^{+} \Phi_{\epsilon}(v-\psi) ||_{L^{p}(\Omega)}\to 0 \qquad \textrm{as} \ k \to \infty$$
%since $v_{k} \to v$ in $L^{p}(\Omega)$.
%By using the argument in the proof of Lemma \ref{obsoun}, we obtain
%$$ ||w_{k}-w||_{L^{\infty}(\Omega)}\to 0 \qquad \textrm{as} \quad k \to \infty$$
%and this implies
%$$ ||w_{k}-w||_{L^{p}(\Omega)}\to 0 \qquad \textrm{as} \quad k \to \infty.$$

Therefore, by Lemma \ref{sfpt}, there exists a function $u_{\epsilon} \in K$ satisfying $S_{\epsilon}u_{\epsilon}=u_{\epsilon}$,
and this implies that $u_{\epsilon}$ is a viscosity solution of \eqref{oblobsapp}.
Furthermore, from \eqref{obsgest} and Lemma \ref{ob_mthmgen}, we also observe that
\begin{align} \label{uepest}  
||u_{\epsilon}||_{ W^{2,p}(\Omega)} \le C(|| f ||_{ L^{p}(\Omega)}+||\psi ||_{ W^{2,p}(\Omega)})
\end{align}
for some $C=C(n, \lambda, \Lambda, p, \delta_{0},b,c,   ||\beta||_{C^{2}(\partial \Omega)},\diam(\Omega), \rho_{0})$. 
This shows that $\{ u_{\epsilon} \}_{\epsilon>0} $ is uniformly bounded in $W^{2,p}(\Omega)$.

Recall that $p>n$. Then we observe that $W^{2,p}(\Omega) \subset \subset C^{1, \alpha_{0}}(\overline{\Omega})$ for some $0<\alpha_{0}< 1-n/p $ by Morrey imbedding.
Therefore, we can find a subsequence $\{ u_{\epsilon_{j}} \}$ with $\epsilon_{j} \searrow 0$ and a function $u \in  W^{2,p}(\Omega) $ such that
\begin{align}\label{uepcon} 
\left\{ \begin{array}{ll}
u_{\epsilon_{j}} \rightharpoonup u  \quad \textrm{in} \ W^{2,p}(\Omega),\\
u_{\epsilon_{j}} \to u  \quad \textrm{in} \ W^{1,p}(\Omega) \subset C^{0,\alpha_{0}}(\overline{\Omega})\\
\end{array} \right. 
\end{align}
as $j \to \infty$.

Now we claim that $u$ is indeed the unique viscosity solution of \eqref{eloblobs}.
We first see that $u$ is uniformly bounded and equicontiuous on $\partial \Omega$ from \eqref{uepest} and Morrey imbedding.
Thus, by using Arzel\'{a}-Ascoli criterion, we have
$$\beta \cdot Du=0 \quad \textrm{on} \ \partial \Omega$$ in the viscosity sense.
On the other hand, from \eqref{oblobsapp}, we observe that
$$F(D^{2}u_{\epsilon_{j}},Du_{\epsilon_{j}},u_{\epsilon_{j}},x) = g^{+} \Phi_{\epsilon_{j}}(u_{\epsilon_{j}}-\psi)+ f- g^{+} \le f \qquad \textrm{in} \ \Omega  $$
for each $j$.
Passing to the limit $j \to \infty$, we have $F(D^{2}u,Du,u,x) \le f$ in $\Omega$.

Next, we show that
$$ u \ge \psi \quad \textrm{in} \  \overline{\Omega}.$$
We first see that $ \Phi_{\epsilon_{j}}(u_{\epsilon_{j}}-\psi) \equiv 0$ 
on the set $$V_{j}=\{ x \in \overline{\Omega} :u_{\epsilon_{j}}(x) < \psi(x) \}.$$
If $V_{j}=\varnothing$, we have $u_{\epsilon_{j}} \ge \psi$ in $\overline{\Omega}$, and so we are done.
Now suppose that $V_{j} \neq \varnothing$. Then,
$$ F(D^{2}u_{\epsilon_{j}},Du_{\epsilon_{j}}, u_{\epsilon_{j}},x) = f(x)-g^{+}(x) \qquad \textrm{for} \ x \in V_{j}. $$  
We note that $V_{j}$ is relatively open in $\overline{\Omega} $ for each $j$ since $u_{\epsilon_{j}} \in C(\overline{\Omega})$.

Recall that
$$ F(D^{2}\psi,D\psi, \psi,x) =f-g \ge F(D^{2}u_{\epsilon_{j}},Du_{\epsilon_{j}},u_{\epsilon_{j}},x)  \quad \textrm{in} \ V_{j}.$$
And we also have $u_{\epsilon_{j}} =\psi$ on $\partial V_{j} \backslash \partial \Omega$.

Now we can apply Lemma \ref{obcom} to obtain $u_{\epsilon_{j}} \ge \psi $ in $V_{j}$,
which is a contradiction to the definition of $V_{j}$ and thus $V_{j} = \varnothing$ for each $j$.
Therefore, we can obtain $u \ge \psi $ in $ \overline{\Omega}$.

We next claim that 
$$ F(D^{2}u,Du,u,x)=f \qquad \textrm{in} \ V:= \{ x \in \Omega: u(x) > \psi(x) \}. $$
For each $m \in \mathbb{N}$, we have
$$ \Phi_{\epsilon_{j}}(u_{\epsilon_{j}}-\psi) \to 1  \quad \textrm{a.e. in} \ \bigg\{x \in \Omega: u(x)> \psi(x) +\frac{1}{m} \bigg\} $$
as $j \to \infty$.
Thus, for $$V= \{x\in \Omega : u(x) > \psi(x) \} = \bigcup_{m=1}^{\infty} \bigg\{x\in \Omega :  u (x)> \psi(x) + \frac{1}{m} \bigg\},$$ we derive
$$ g^{+} \Phi_{\epsilon_{j}}(u_{\epsilon_{j}}-\psi)+ f- g^{+}  \to f \qquad \textrm{a.e.} \ \ \textrm{in} \  V
$$
as $j \to \infty$.  
Thus, we deduce that
$$ F(D^{2}u,Du,u,x)=g^{+}+f-g^{+}= f \qquad \textrm{in} \ V$$ 
in the viscosity sense.

Therefore, we can conclude that $u$ is a viscosity solution of \eqref{eloblobs}.
Moreover, from \eqref{uepest} and \eqref{uepcon}, we have
\begin{align*}
 ||u||_{W^{2,p}(\Omega)}  \le \liminf_{j \to \infty} ||u_{\epsilon_{j}}||_{W^{2,p}(\Omega)}
\le  C(|| f ||_{ L^{p}(\Omega)}+||\psi ||_{ W^{2,p}(\Omega)})
\end{align*}
for some constant $C= C(n, \lambda, \Lambda, p, \delta_{0},b,c,  ||\beta||_{C^{2}(\partial \Omega)},  \diam(\Omega),\rho_{0})$.

For the uniqueness, let $u_{1}$ and $u_{2}$ be two viscosity solutions of \eqref{eloblobs}.
Suppose that $u_{1} \not\equiv u_{2}$.
Then we can assume without loss of generality that 
$$ G = \{ u_{2} > u_{1} \} \neq \varnothing. $$
Since $u_{2} > u_{1} \ge \psi$ in $G$, we see that $F(D^{2}u_{2},Du_{2},u_{2},x)= f$ in $G$ in the viscosity sense.
Then we have
\begin{align*}
\left\{ \begin{array}{ll}
F(D^{2}u_{1},Du_{1},u_{1},x) \le F(D^{2}u_{2},Du_{2},u_{2},x)=f & \textrm{in $G, $}\\
u_{1} =u_{2}  & \textrm{on $\partial G \backslash \partial \Omega$,} \\
\beta \cdot Du_{1} = \beta \cdot Du_{2} = 0 & \textrm{on $\partial G \cap \partial \Omega$.}\\
\end{array} \right. 
\end{align*}
Now applying \cite[Theorem 2.10]{MR1376656} or Lemma \ref{obcom} to $u_{1}-u_{2}$, we deduce that $u_{1} \ge u_{2}$ in $G$ whether $\partial G \cap \partial \Omega= \varnothing$ or not.
This contradicts the definition of the set $G$, and hence $u_{1}=u_{2}$. 
\end{proof}
\color{black}

\bibliographystyle{alpha}

\end{document}